\newtheorem{thm}{Theorem}[section]
\newtheorem{lem}[thm]{Lemma}
\newtheorem{prop}[thm]{Proposition}
\newtheorem{cor}[thm]{Corollary}
\theoremstyle{definition}
\newtheorem{defn}[thm]{Definition}
\theoremstyle{remark}
\newtheorem*{rem}{Remark}
\newcommand{\CC}{\mathbb{C}}
\newcommand{\ZZ}{\mathbb{Z}}
\newcommand{\QQ}{\mathbb{Q}}
\newcommand{\NN}{\mathbb{N}}
\newcommand{\HH}{\mathbb{H}}
\newcommand{\SL}{\text{SL}_2(\ZZ)}
\newcommand{\mat}[4]{\begin{bmatrix} #1 & #2 \\ #3 & #4 \end{bmatrix}}
\title[Congruences for generalized cubic partitions via localization]{A congruence family modulo powers of $5$ for generalized cubic partitions via the localization method}
\author{Dalen Dockery}
\address{Department of Mathematics, University of Tennessee, Knoxville, TN 37996, USA}
\email{ddocker5@vols.utk.edu}
\begin{document}

\begin{abstract}
Recently Amdeberhan, Sellers, and Singh introduced a new infinite family of partition functions called generalized cubic partitions. Given a positive integer $d$, they let $a_d(n)$ be the counting function for partitions of $n$ in which the odd parts are unrestricted and the even parts are $d$-colored. These partitions are natural generalizations of Chan's notion of cubic partitions, as they coincide when $d=2.$ Many Ramanujan-like congruences exist in the literature for cubic partitions, and in their work Amdeberhan, Sellers, and Singh proved a collection of congruences satisfied by $a_d(n)$ for various $d \geq 1$, including an infinite family with prime moduli.  Our goal in this paper is to prove a family of congruences modulo powers of 5 for $a_3(n)$. More specifically, our main theorem asserts
\[a_3\left(5^{2\alpha}n +\gamma_{\alpha} \right) \equiv 0 \pmod{5^\alpha},\]
where
\[\gamma_{\alpha} \coloneqq 20 + \frac{19 \cdot 25 (25^{\alpha-1}-1)}{24}.\]
In order to prove these congruences, we use an approach centered around modular functions, as in the seminal work of Watson and Atkin on proving Ramanujan's congruences for the partition function $p(n)$. However, due to the complexity of the modular curve $X_0(10)$ associated to our modular functions, the classical method cannot be directly applied. Rather, we utilize the very recently developed localization method of Banerjee and Smoot, which is designed to treat congruence families over more complicated modular curves, such as $X_0(10).$ 
\end{abstract}

\maketitle

\section{Introduction}\label{sec:intro}
A partition of a natural number $n$ is a sum $n = \lambda_1 + \lambda_2 + \dots +\lambda_r$ of positive integers $\lambda_1 \geq \cdots \geq \lambda_r,$ called the parts of the partition, and we denote by $p(n)$ the number of partitions of $n$. In his celebrated 1919 paper (see \cite{BO}), Ramanujan revolutionized the study of the partition function $p(n)$ by discovering the infinite congruence families (correcting a slight mistake of Ramanujan in the exponent of the second congruence)
\begin{equation}
\begin{split}
p(5^jn+\beta_5(j)) &\equiv 0 \pmod{5^j}, \\
p(7^jn+\beta_7(j)) &\equiv 0 \pmod{7^{\lfloor j/2\rfloor+1}}  \label{eq:ram_cong}, \\ 
p(11^jn+\beta_{11}(j)) &\equiv 0 \pmod{11^j},
\end{split}
\end{equation}
where $24\beta_\ell(j) \equiv 1 \pmod{\ell^j}.$
Using clever $q$-series manipulations, Ramanujan was able to establish the $j=1$ cases of the first two congruences in \eqref{eq:ram_cong}, i.e. for the moduli $5$ and $7$. However, proofs of the general congruences modulo powers of 5 and 7 are attributed to Watson (\cite{Watson}). Watson's proof relied on special sequences $(L_{\alpha,\ell})_{\alpha \geq 1}$ of modular functions over the modular curves $X_0(\ell)$ ($\ell \in \{5,7\}$), as he showed that $L_{\alpha,\ell}$ naturally encodes the partition numbers $p\left(\ell^\alpha n + \beta_\ell(\alpha)\right).$ Thus, Watson was able to deduce the Ramanujan congruences modulo powers of $\ell$ by showing that $L_{\alpha,\ell}$ have integral Fourier coefficients that are all divisible by the appropriate power of $\ell$.

The Ramanujan congruences modulo powers of 11 evaded proof for many years, however; a proof of this congruence family was finally given by Atkin (\cite{Atkin}) nearly 30 years after Watson's work, using a suitably modified approach. The main challenge in this case lies in the fact that the modular curve $X_0(11)$ has genus 1. By contrast, $X_0(5)$ and $X_0(7)$ have genus 0. 

Since Watson's and Atkin's seminal work, the method of constructing sequences of modular functions to prove infinite congruence families has seen much success. In fact, many such proofs are very similar to those of Watson and Atkin, both in structure and in difficulty. However, others are notably harder, and recent work of Smoot (see \cite{Smoot2elong}, for example) suggests that another topological property of $X_0(N)$ plays a major role in the complexity of such a proof: the number of cusps. When the modular curve $X_0(N)$ has two cusps, such as in all three cases of the Ramanujan congruences for $p(n)$ or, more generally, when $N$ is prime, the spaces of modular functions with a single pole at a given cusp are quite easy to describe. In fact, for modular curves of genus 0, these spaces are isomorphic to $\mathbb{C}[t]$, where the modular function $t$ is classically called a Hauptmodul. 

When the cusp count exceeds 2, the modular functions may have poles at various cusps, and so representing such spaces of functions becomes much more difficult. One tool for tackling congruences over these modular curves is the so-called localization method, recently developed by Banerjee and Smoot (see \cite{BS}, \cite{BS2}, \cite{Smoot2elong}). Their key insight is to express the modular functions of interest not as polynomials in some given reference functions (such as a Hauptmodul), but rather as rational polynomials in the reference functions with predictable denominators. 

In this work, our goal is to use the localization method to prove an infinite congruence family modulo powers of 5 for the function $a_3(n).$ This partition function is a member of an infinite family recently introduced by Amdeberhan, Sellers, and Singh (\cite{ASS}) which they called generalized cubic partitions. Given an integer $d \geq 1$, Amdeberhan, Sellers, and Singh let $a_d(n)$ denote the number of partitions of $n$ in which the odd parts are unrestricted and the even parts may be independently chosen to be one of $d$ colors. This family of partitions naturally generalizes Chan's (\cite{Chan}) notion of cubic partitions $c(n)$, named after their connection to Ramanujan's continued cubic fraction. A cubic partition is one in which the odd parts are unrestricted and the even parts may be independently chosen to be one of 2 colors; thus, clearly $c(n) = a_2(n).$ Moreover, note that $a_1(n)=p(n)$, so generalized cubic partitions can also be thought of as a generalization of the classical partition function. For more background on (generalized) cubic partitions and their arithmetic properties, see papers such as \cite{Chan},\cite{CT}, \cite{CD}, \cite{ASS}, \cite{Guad}. 

We now state our main theorem, which provides a congruence family modulo powers of 5 for the function $a_3(n)$.

\begin{thm}\label{thm:main}
For all $k \geq 1,$ the congruence
\[a_3\left(5^{2\alpha}n +\gamma_{\alpha} \right) \equiv 0 \pmod{5^\alpha}\]
holds, where $\gamma_{\alpha}$ is the integer defined by
\begin{equation}\label{eq:gammas}
\gamma_{\alpha} \coloneqq 20 + \frac{19 \cdot 25 (25^{\alpha-1}-1)}{24}.
\end{equation}
\end{thm}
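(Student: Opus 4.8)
The plan is to follow the general philosophy of Watson's proof of the Ramanujan congruences modulo powers of $5$, but carried out in the localization framework of Banerjee and Smoot. First I would find the generating function for $a_3(n)$ in terms of eta-quotients. Since the even parts are $3$-colored and odd parts unrestricted, one has
\[
\sum_{n \geq 0} a_3(n) q^n = \frac{1}{(q;q)_\infty} \cdot \frac{1}{(q^2;q^2)_\infty^2} = \frac{(q^2;q^2)_\infty}{(q;q)_\infty (q^2;q^2)_\infty^3} \cdot (\cdots),
\]
which I would rewrite cleanly so that, after the substitution $q \mapsto$ (a suitable power) and multiplication by an appropriate power of $q$, the relevant dissection component $\sum_n a_3(5^{2\alpha} n + \gamma_\alpha) q^n$ becomes (up to an eta-quotient prefactor) a modular function on $X_0(10)$ — the presence of the modulus $2$ from the even parts together with the prime $5$ forces level $10$. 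The constant $\gamma_\alpha$ in \eqref{eq:gammas} is exactly the $5$-adic "fixed point" of the relevant $24$-inverse, i.e. $24\gamma_\alpha \equiv 19 \cdot 25^{\alpha} \pmod{25^\alpha}$ or the analogous normalization; I would pin down $\gamma_\alpha$ by tracking how the operator $U_5$ (or $U_{25}$) acts on the exponent arithmetic.

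Next I would set up the sequence of modular functions. Define an operator $\mathbf{U}$ built from $U_{25}$ conjugated by multiplication by a fixed eta-quotient $t$ (the analogue of Watson's $\xi$), chosen so that $\mathbf{U}$ maps the relevant space of modular functions on $X_0(10)$ to itself and so that iterating it on a fixed seed function $L_0$ produces a sequence $(L_\alpha)_{\alpha \geq 1}$ with
\[
L_\alpha = (\text{eta-quotient}) \cdot \sum_{n \geq 0} a_3\!\left(5^{2\alpha} n + \gamma_\alpha\right) q^n .
\]
Because $X_0(10)$ has four cusps, $L_\alpha$ will in general have poles at several cusps, and the key structural claim — the heart of the localization method — is that each $L_\alpha$ lies in a module of the shape $\mathbb{Z}[t] \oplus \frac{1}{(\text{something})}\mathbb{Z}[t]$, or more precisely a localization $\mathbb{Z}[t]_{\mathfrak{p}}$-module of small rank, with explicitly bounded denominators. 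I would identify a Hauptmodul-like function $t$ and a second function (or a single generator of a rank-$2$ module over $\mathbb{C}[t]$) for the relevant space, establish the $\mathbb{C}[t]$-module structure, and then compute the matrix of $\mathbf{U}$ with respect to a good basis.

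The induction then runs as follows: assuming $L_\alpha$ has the prescribed form with coefficients divisible by $5^{\alpha}$ (or by the appropriate power after clearing a fixed denominator), apply $\mathbf{U}$ and use (i) the explicit action of $\mathbf{U}$ on the module basis, together with (ii) a divisibility lemma for the coefficients of $U_5$ acting on functions of bounded order at the cusps — analogous to Lemma-style estimates "$U_5$ gains a factor of $5$ when applied to functions with enough vanishing" — to conclude that $L_{\alpha+1}$ again has the prescribed form with one more factor of $5$. Reading off the $q$-expansion of $L_\alpha$ and dividing by the (unit, $5$-adically) eta-quotient prefactor yields $a_3(5^{2\alpha} n + \gamma_\alpha) \equiv 0 \pmod{5^\alpha}$.

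The main obstacle, as in all applications of the localization method, is establishing the precise module structure — i.e. the exact "shape" of $L_\alpha$, including the correct localized denominator and the correct rank — and proving that $\mathbf{U}$ preserves it while improving the $5$-adic valuation at the right rate. This requires a careful analysis of orders of vanishing at all four cusps of $X_0(10)$, choosing the auxiliary eta-quotient $t$ so the relevant space is finitely generated with small rank, and verifying a finite but delicate collection of base cases and operator-matrix congruences by hand or symbolic computation. The genus-$0$-ness of $X_0(10)$ helps, but the four cusps are what make the naive Watson approach fail and force the localization refinement.
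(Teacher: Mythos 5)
Your plan correctly identifies the overall strategy the paper uses: realize $\sum_n a_3(5^{2\alpha}n+\gamma_\alpha)q^n$ (times an eta-quotient prefactor) via a sequence of modular functions on $X_0(10)$ generated by iterating $U_5$-type operators, and run an induction inside a localized polynomial ring in a Hauptmodul. (For the record, the paper's operator is not a single conjugated $U_{25}$ but two alternating $U_5$-steps, $U^{(0)}(f)=U_5(\Phi f)$ with $\Phi=\eta(25\tau)\eta(50\tau)^2/(\eta(\tau)\eta(2\tau)^2)$ and $U^{(1)}=U_5$, with the factor of $5$ gained only at the $U^{(1)}$ step; and the localization is at the single element $1+5x=z$, which is a unit modulo $5$, so the denominators never interfere with the congruence. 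These are details one could reasonably expect to recover.)

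The genuine gap is in the inductive step. You assert that a ``divisibility lemma'' shows the operator gains one factor of $5$ per application on the whole localized module of the prescribed shape. That is false: in the paper's notation the required valuation inequality $\theta_1(m)+\pi_1(m,r)\ge \theta_0(r)+1$ fails exactly for $r=1$ with $1\le m\le 4$ and $r=2$ with $4\le m\le 8$, so $U^{(1)}$ does \emph{not} carry all of $\hat{\mathcal{V}}_n$ into $5\,\mathcal{V}_{5n}^{(0)}$. The proof only closes because the induction hypothesis is strengthened: one must work in the proper subspace $\mathcal{V}_n^{(1)}\subset\hat{\mathcal{V}}_n$ cut out by the two linear coefficient congruences in \eqref{eq:Vn1}, which (via Corollary \ref{cor:h_cor}) supply the missing factor of $5$ for the problematic pairs $(m,r)$; and one must then prove the nontrivial stability statement (Theorem \ref{thm:stab}) that $\tfrac15 U^{(0)}\circ U^{(1)}$ maps $\mathcal{V}_n^{(1)}$ back into $\mathcal{V}_{25n+5}^{(1)}$, which requires the explicit $\hat t(w)$ computations and an ideal-membership verification modulo $5I$. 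Your ``finite but delicate collection of base cases and operator-matrix congruences'' gestures at computation, but without identifying that the induction hypothesis itself must carry these auxiliary congruences --- and that their preservation under the composite operator is a separate theorem --- the argument as written does not close.
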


Theorem \ref{thm:main} is a corollary of a more technical theorem, Theorem \ref{thm:main2}; this theorem involves a special sequence of modular functions $(L_\alpha)_{\alpha \geq 1}$ that encodes $a_3(5^{2\alpha}n+\gamma_\alpha)$. Importantly, these modular functions of interest are over the modular curve $X_0(10),$ which is the same modular curve as is studied by Banerjee and Smoot in \cite{BS}. As such, our analysis here is quite convenient, as we are able to immediately apply several of their results, and many others follow analogously. Here our aim is to summarize arguments of Banerjee and Smoot in order to keep statements in context; however, for a more detailed account, we refer the reader to \cite{BS}. 

The remainder of this paper is organized as follows: in Section \ref{sec:background}, we present the required background on modular functions. In Section \ref{sec:seq}, we define the aforementioned sequence of modular functions $(L_\alpha)_{\alpha \geq 1}$ and present some helpful facts related to this sequence. In Section \ref{sec:prelim} we prove several preliminary results that are critical to our proof of Theorem \ref{thm:main2}, which we provide in Section \ref{sec:proof}. Finally, we close in Section \ref{sec:closing} with two isolated congruences satisfied by $a_d(n)$ to motivate further avenues of study. 

\section{Acknowledgments}
The author would like to sincerely thank Marie Jameson for her invaluable advice and guidance. 

\section{Background}\label{sec:background}
Denote by $\SL$ the group of $2 \times 2$ matrices over $\ZZ$ with determinant 1, i.e.
\[\SL \coloneqq \left\{\mat{a}{b}{c}{d} : ad-bc = 1, (a,b,c,d) \in \ZZ^4\right\}.\]
We are primarily concerned with the congruence subgroups $\Gamma_0(N) \leq \SL$, defined by
\[\Gamma_0(N) \coloneqq \left\{\mat{a}{b}{c}{d} \in \SL : c \equiv 0 \pmod{N}\right\}.\]

Let $\HH \coloneqq \{z \in \CC : \text{Im}(z) > 0\}$ be the complex upper half-plane. The group $\SL$ and its subgroups $\Gamma_0(N)$ act on $\HH$ via linear fractional transformations: for $\gamma = \mat{a}{b}{c}{d} \in \SL$ and $\tau \in \HH$, 
\[\gamma \tau \coloneqq \frac{a\tau+b}{c\tau+d}.\]
The orbits of $\mathbb{P}^1(\mathbb{Q}) \coloneqq \QQ \cup \{\infty\}$ under the action of $\Gamma_0(N)$ are called the cusps of $\Gamma_0(N)$. For example, $\SL = \Gamma_0(1)$ has one cusp, usually denoted by $\infty.$ 

\begin{defn}
The modular curve $X_0(N)$ is the set of orbits of $\hat{\mathbb{H}} \coloneqq \mathbb{H} \cup \mathbb{P}^1(\mathbb{Q})$ under the action of $\Gamma_0(N).$
\end{defn}

Classically, the modular curve $X_0(N)$ can be given the structure of a compact Riemann surface (see, e.g., \cite{DS}).  

\begin{defn}
Let $N \in \mathbb{Z}_{\geq 1}$. A holomorphic function $f : \HH \rightarrow \CC$ is called a modular function of level $\Gamma_0(N)$ if the following properties hold:
\begin{enumerate}
\item for all $\gamma \in \Gamma_0(N),$ $f(\tau)$ satisfies
\[f(\gamma \tau)=  f(\tau),\]
\item for all $\gamma = \mat{a}{b}{c}{d} \in \SL$ we have an expansion of $f(\gamma \tau)$ in the variable $q_N \coloneqq \exp(2\pi i \tau/N)$:
\[f(\gamma \tau) = \sum\limits_{n \geq n_\gamma} a_\gamma(n) q_N^{n \cdot \gcd(c^2,N)}.\]
\end{enumerate}
If $\gamma = I$ we simply write
\[f(\tau) = \sum\limits_{n \geq n_0} a(n)q^n\]
for $q \coloneqq \exp(2\pi i \tau),$ called the Fourier expansion of $f(\tau)$ (at $\infty$).
\end{defn}
If $n_\gamma \geq 0,$ we say that $f$ is holomorphic at the cusp $[a/c]_{N}.$ If $n_\gamma <0$, we say that $f$ has a pole of order $n_\gamma$ at the cusp $[a/c]_{N},$ and its principal part is 
\[\sum\limits_{n=n_\gamma}^{-1} a_\gamma(n) q_N^{n \cdot \gcd(c^2,N)}.\]
In either case, we denote the order of $f$ at the cusp $[a/c]_{N}$ by $\text{ord}_{a/c}^N(f) \coloneqq n_\gamma.$ Let $\mathcal{M}(\Gamma_0(N))$ be the $\CC$-vector space of all modular functions over $\Gamma_0(N)$. If $[a/c]_{N}$ is a cusp of $\Gamma_0(N),$ then the subspace of $\mathcal{M}(\Gamma_0(N))$ containing functions that are holomorphic at every cusp except possibly $[a/c]_N$ is denoted $\mathcal{M}^{a/c}(\Gamma_0(N))$. Henceforth, we shall identify a modular function $f(\tau)$ with its Fourier expansion $f(q)$. 

\begin{rem}
Meromorphic functions on $X_0(N)$ with poles supported at the cusps are in one-to-one correspondence with modular functions of level $\Gamma_0(N).$
\end{rem}

An extremely important operator on spaces of modular functions is Atkin's $U_d$-operator, defined by its action on Fourier expansions. 

\begin{defn}
For $d \in \NN$, define $U_d$ by 
\begin{equation}\label{eq:U}
U_d \left(\sum\limits_{n \geq n_0} a(n) q^n \right) \coloneqq \sum\limits_{dn \geq n_0} a(dn) q^n. 
\end{equation}
\end{defn}
Clearly $U_d$ is $\mathbb{C}$-linear. The following two lemmas record key mapping properties of the $U_d$-operator.

\begin{lem}[Lemmas 6 and 7 of \cite{AL}]\label{lem:Umap}
Suppose $d, N$ are positive integers such that $d \mid N$. Then 
\[U_d : \mathcal{M}(\Gamma_0(N)) \rightarrow \mathcal{M}(\Gamma_0(N)).\]
Moreover, if $d^2 \mid N$ then 
\[U_d : \mathcal{M}(\Gamma_0(N)) \rightarrow \mathcal{M}\left(\Gamma_0\left(\frac{N}{d}\right)\right).\]
\end{lem}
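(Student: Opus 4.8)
The plan is to realize $U_d$ geometrically as a weighted sum over coset representatives, and then to check directly that the resulting function satisfies the two defining conditions of a modular function over the appropriate group. First I would record the classical identity expressing $U_d$ through the weight-$0$ slash action: if $f(\tau)=\sum_n a(n)q^n$ with $q=e^{2\pi i\tau}$, a short manipulation of the Fourier expansion (averaging $f((\tau+j)/d)$ against $d$-th roots of unity) gives
\[
(U_d f)(\tau)=\frac1d\sum_{j=0}^{d-1} f\!\left(\frac{\tau+j}{d}\right)=\frac1d\sum_{j=0}^{d-1} f|_0\,\alpha_j,\qquad \alpha_j\coloneqq\mat{1}{j}{0}{d}.
\]
Since $f$ is holomorphic on $\HH$ and each map $\tau\mapsto(\tau+j)/d$ sends $\HH$ to $\HH$, the function $U_d f$ is automatically holomorphic on $\HH$; it therefore remains only to verify invariance under the relevant congruence group and the growth condition at the cusps.

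For the invariance I would use the standard coset decomposition (this is the content of the cited lemmas of \cite{AL}): when $d\mid N$, the matrices $\{\alpha_j:0\le j<d\}$ form a complete set of representatives for $\Gamma_0(N)\backslash\,\Gamma_0(N)\mat{1}{0}{0}{d}\Gamma_0(N)$. Given $\gamma\in\Gamma_0(N)$, for each $j$ one can write $\alpha_j\gamma=\delta_j\,\alpha_{\pi(j)}$ for some $\delta_j\in\Gamma_0(N)$ and some permutation $\pi$ of $\{0,\dots,d-1\}$; this is precisely the point at which the divisibility $d\mid N$ is used, as it forces the lower-left entry $dc$ of $\alpha_j\gamma\alpha_{\pi(j)}^{-1}$ to be divisible by $N$ (the determinant condition and a choice of $\pi(j)\bmod d$ clearing the remaining denominators). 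Since $f|_0\delta_j=f$, the sum is unchanged under $\gamma$, i.e. $(U_d f)|_0\gamma=U_d f$. For the second assertion, the identical computation carried out for $\gamma\in\Gamma_0(N/d)$ still yields a matrix whose lower-left entry $dc$ is divisible by $N$, because $d^2\mid N$ means $d\cdot(N/d)\mid dc$; hence $U_d f$ is in fact invariant under the larger group $\Gamma_0(N/d)$.

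The growth condition at the cusps is where I expect the \emph{real work} to lie, since this is the step that genuinely uses that $f$ is meromorphic (not merely holomorphic) at the cusps. Given $\sigma\in\SL$, I would write $(U_d f)|_0\sigma=\frac1d\sum_j f|_0(\alpha_j\sigma)$ and, for each $j$, put the integral matrix $\alpha_j\sigma$ (of determinant $d$) into Hermite normal form $\alpha_j\sigma=\sigma_j'\beta_j$ with $\sigma_j'\in\SL$ and $\beta_j$ upper triangular with nonnegative integer entries. Then $f|_0(\alpha_j\sigma)=(f|_0\sigma_j')|_0\beta_j$, and by the hypothesis on $f$ at the cusp attached to $\sigma_j'$ this is a Laurent expansion in a fractional power of $q$ that is bounded below; summing the finitely many such series and collecting terms produces an expansion of $U_d f$ at $[a/c]_N$ of exactly the shape demanded in condition~(2) of the definition, with order bounded below. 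Combining this with the invariance already established places $U_d f$ in $\mathcal{M}(\Gamma_0(N))$ when $d\mid N$, and in $\mathcal{M}(\Gamma_0(N/d))$ when $d^2\mid N$. The only delicate bookkeeping is tracking the widths and denominators of these cusp expansions so that the final series is indexed as in the definition; everything else is formal.
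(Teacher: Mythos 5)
The paper itself gives no proof of this lemma: it is quoted from Atkin and Lehner, and your argument is essentially the one found there (coset representatives $\alpha_j=\mat{1}{j}{0}{d}$, rearrangement of the sum under right multiplication by $\gamma$, and Hermite/elementary-divisor reduction to control the cusp expansions). The first and third steps of your sketch are sound in outline.

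The gap is in your justification of the second assertion. You locate the use of $d^2\mid N$ in the divisibility of the lower-left entry of $\delta_j=\alpha_j\gamma\alpha_{\pi(j)}^{-1}$, but that divisibility is automatic as soon as $d\mid N$: if $\gamma\in\Gamma_0(N/d)$ has lower-left entry $c$ with $(N/d)\mid c$, then the lower-left entry of $\alpha_j\gamma\alpha_k^{-1}$ equals $dc$, and $N=d\cdot(N/d)$ divides $dc$ with no appeal to $d^2\mid N$. Taken at face value, your argument would therefore show $U_d:\mathcal{M}(\Gamma_0(N))\to\mathcal{M}(\Gamma_0(N/d))$ whenever $d\mid N$, which is false (take $N=d=2$: $U_2$ does not send $\mathcal{M}(\Gamma_0(2))$ into $\mathcal{M}(\SL)$). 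The hypothesis $d^2\mid N$ is actually needed in the step you describe as ``clearing the remaining denominators'': the upper-right entry of $\alpha_j\gamma\alpha_k^{-1}$ is $\bigl(-k(a+jc)+b+je\bigr)/d$, and to solve $k(a+jc)\equiv b+je\pmod{d}$ with $j\mapsto k$ a permutation one needs $a+jc$ to be a unit modulo $d$. For $\gamma\in\Gamma_0(N)$ this holds because $d\mid N\mid c$ forces $a+jc\equiv a\pmod d$ and $ae\equiv 1\pmod d$; for $\gamma\in\Gamma_0(N/d)$ one only knows $(N/d)\mid c$, and recovering $d\mid c$ requires $d\mid (N/d)$, i.e.\ $d^2\mid N$. (Concretely, with $N=d=2$, $\gamma=\mat{1}{0}{1}{1}$ and $j=1$ one gets $a+jc=2$ and no admissible $k$ exists.) With that correction the invariance argument goes through, and the cusp analysis, including the width bookkeeping you defer, is routine.
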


\begin{lem}[Lemma 7.1 of \cite{BS}]\label{lem:Umap2}
If $f \in \mathcal{M}^\infty(\Gamma_0(50)),$ then $U_5(f) \in \mathcal{M}^\infty(\Gamma_0(10)).$
\end{lem}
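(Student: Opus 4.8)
The plan is to bootstrap from Lemma~\ref{lem:Umap} and then carry out a direct cusp analysis. Since $5 \mid 50$ and $5^2 = 25$ divides $50$, the second half of Lemma~\ref{lem:Umap} (applied with $d = 5$ and $N = 50$) already gives $U_5(f) \in \mathcal{M}(\Gamma_0(10))$; thus the entire substance of the lemma is the \emph{location} of the poles. Writing $g \coloneqq U_5(f)$, it remains to show that $g$ is holomorphic at every cusp of $\Gamma_0(10)$ other than $\infty$. Because $10$ is squarefree, we may take $\{0,\, 1/2,\, 1/5,\, \infty\}$ as a complete set of cusp representatives for $\Gamma_0(10)$, so it suffices to treat the three cusps $[a/c]_{10}$ with $c \in \{1,2,5\}$ (equivalently, those with $10 \nmid c$).

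The engine of the argument is the elementary identity
\[
g(\tau) \;=\; \frac{1}{5}\sum_{j=0}^{4} f\!\left(\frac{\tau+j}{5}\right),
\]
which holds as an equality of holomorphic functions on $\HH$ and follows at once from comparing Fourier coefficients. To read off the behavior of $g$ at a cusp $[a/c]_{10}$, I fix a scaling matrix $\sigma = \mat{a}{b}{c}{d} \in \SL$ with $\sigma\infty = a/c$ and examine $g(\sigma\tau)$ as $\tau \to i\infty$. With $\sigma_j \coloneqq \mat{1}{j}{0}{5}$ one has $\tfrac{\sigma\tau + j}{5} = (\sigma_j\sigma)\tau$, hence
\[
g(\sigma\tau) \;=\; \frac{1}{5}\sum_{j=0}^{4} f\big((\sigma_j\sigma)\tau\big),
\]
where each $M_j \coloneqq \sigma_j\sigma$ is an integral matrix with $\det M_j = 5$ whose lower-left entry is $5c$.

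The next step is to put each $M_j$ into Hermite normal form, writing $M_j = \gamma_j \mat{\mu_j}{\nu_j}{0}{\rho_j}$ with $\gamma_j \in \SL$ and $\mu_j \rho_j = 5$. Using property (2) of a modular function, $f(M_j\tau)$ is then, in the local parameter at $i\infty$, the Fourier expansion of $f$ at the cusp $\gamma_j\infty$ of $\Gamma_0(50)$ after an innocuous reparametrization; in particular $f(M_j\tau)$ stays bounded as $\tau \to i\infty$ as soon as $f$ is holomorphic at the cusp $\gamma_j\infty$. This is the one point that needs care: the lower-left entry of $\gamma_j$ divides the lower-left entry of $M_j$, i.e. it divides $5c$, and for $c \in \{1,2,5\}$ the number $5c \in \{5, 10, 25\}$ is nonzero and strictly smaller than $50$. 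Since a cusp $a'/c'$ of $\Gamma_0(50)$ is $\Gamma_0(50)$-equivalent to $\infty$ only when $50 \mid c'$, the cusp $\gamma_j\infty$ is never equivalent to $\infty$. Because $f \in \mathcal{M}^\infty(\Gamma_0(50))$ is holomorphic at every cusp of $\Gamma_0(50)$ except possibly $\infty$, each summand $f(M_j\tau)$ is therefore bounded, so $g(\sigma\tau)$ is bounded as $\tau \to i\infty$, i.e. $g$ is holomorphic at $[a/c]_{10}$. Running this for $c \in \{1,2,5\}$ yields $U_5(f) \in \mathcal{M}^\infty(\Gamma_0(10))$. At the cusp $\infty$ itself ($c = 0$) the $\sigma_j$ are already upper triangular with $\gamma_j = I$, so a pole of $f$ at $\infty$ is free to survive --- precisely as the statement permits.

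I expect the only real work to be the bookkeeping in the Hermite-form step: one must check, uniformly over the five residues $j$ and over the chosen cusp representatives, that the denominator of $\gamma_j\infty$ is a proper divisor of $50$, and one must use the standard fact that holomorphy at a cusp depends only on its $\Gamma_0(50)$-equivalence class, so that it transfers from $\gamma_j\infty$ to whatever explicit representative is produced by the normal form. The crude bound $5c < 50$ makes the divisibility check essentially automatic, so I do not anticipate a serious obstacle; indeed, the same reasoning shows more generally that $U_p$ maps $\mathcal{M}^\infty(\Gamma_0(Np^2))$ into $\mathcal{M}^\infty(\Gamma_0(Np))$, of which the lemma is the case $N = 2$, $p = 5$.
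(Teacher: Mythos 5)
Your argument is correct. Note that the paper does not prove this lemma at all --- it simply imports it as Lemma 7.1 of \cite{BS} --- so what you have written is a self-contained proof of a cited result, and it follows the standard route (which is essentially what Banerjee and Smoot do): the coset decomposition $U_5(f)(\tau)=\tfrac{1}{5}\sum_{j=0}^{4}f\bigl(\tfrac{\tau+j}{5}\bigr)$, Hermite normal form $M_j=\gamma_j\left(\begin{smallmatrix}\mu_j&\nu_j\\0&\rho_j\end{smallmatrix}\right)$, and the observation that the lower-left entry of $\gamma_j$ is $c\rho_j$, which for $c\in\{1,2,5\}$ is a nonzero divisor of $5c\le 25<50$ and hence never lands in the $\Gamma_0(50)$-orbit of $\infty$. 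The one step you pass over quickly --- that boundedness of $g(\sigma\tau)$ as $\tau\to i\infty$ implies holomorphy at the cusp $[a/c]_{10}$ --- is harmless here precisely because Lemma \ref{lem:Umap} already guarantees $g\in\mathcal{M}(\Gamma_0(10))$, so $g(\sigma\tau)$ has a Laurent expansion in the local parameter and boundedness kills the principal part. Your closing remark that the same reasoning gives $U_p:\mathcal{M}^\infty(\Gamma_0(Np^2))\to\mathcal{M}^\infty(\Gamma_0(Np))$ is also correct and is the natural level of generality for the statement.
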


We shall also require the following well-known result that can be used to simplify the application of the $U_d$-operator on functions of a particular form. 

\begin{lem}\label{lem:U_PS}
Let $F(q)$ and $G(q)$ be $q$-series and take $d \in \NN.$ Then
\[U_d\left(F(q^d)G(q)\right) = F(q) \cdot U_d\left(G(q)\right).\]
\end{lem}

Many of the modular functions that are of interest to us are eta-quotients, which are functions of the form 
\[\prod\limits_{\delta \mid N} \eta(\delta \tau)^{r_\delta}\]
for integers $N, r_\delta.$ Here $\eta(\tau)$ is Dedekind's eta-function, defined by 
\[\eta(\tau) \coloneqq q^{1/24} \prod\limits_{n=1}^\infty (1-q^n).\]
The following special case of a well-known theorem of Newman allows one to easily prove that a given eta-quotient is a modular function. 

\begin{lem}[Theorem 1.64 of \cite{OnoWeb}]\label{lem:eta-quotient1}
Suppose that the eta-quotient $\prod\limits_{\delta \mid N} \eta(\delta \tau)^{r_\delta}$ satisfies
\begin{enumerate}
\item $\sum\limits_{\delta \mid N} r_{\delta} = 0,$
\item $\sum\limits_{\delta \mid N} \delta r_\delta \equiv 0 \pmod{24},$
\item $\sum\limits_{\delta \mid N} \frac{N}{\delta} r_\delta \equiv 0 \pmod{24},$
\item $\prod\limits_{\delta \mid N} \delta^{r_\delta}$  is a perfect square in $\mathbb{Q}$. 
\end{enumerate}
Then $\prod\limits_{\delta \mid N} \eta(\delta \tau)^{r_\delta} \in \mathcal{M}(\Gamma_0(N)).$ 
\end{lem}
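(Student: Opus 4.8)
Since Lemma \ref{lem:eta-quotient1} is the classical theorem of Newman (in the sharpened form due to Ligozat), the natural proof uses nothing beyond the transformation behavior of the Dedekind eta function. The starting point is its transformation law under the full modular group: for $\gamma = \mat{a}{b}{c}{d} \in \SL$ with $c > 0$,
\[\eta(\gamma \tau) = \varepsilon(\gamma)\,(c\tau+d)^{1/2}\,\eta(\tau),\]
where $\varepsilon(\gamma)$ is an explicit $24$th root of unity built from a Dedekind sum together with a Jacobi symbol in $a$ (equivalently $d$) modulo $c$. The whole argument reduces to feeding this into the eta-quotient $f(\tau) \coloneqq \prod_{\delta \mid N} \eta(\delta\tau)^{r_\delta}$ and tracking the automorphy factor and the root of unity separately.

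For invariance under $\Gamma_0(N)$, fix $\gamma = \mat{a}{b}{c}{d} \in \Gamma_0(N)$ and a divisor $\delta \mid N$. Since $\delta \mid N \mid c$, one has the matrix identity
\[\mat{\delta}{0}{0}{1}\gamma = \gamma_\delta \mat{\delta}{0}{0}{1}, \qquad \gamma_\delta \coloneqq \mat{a}{b\delta}{c/\delta}{d} \in \SL,\]
so that $\eta(\delta\gamma\tau) = \eta\bigl(\gamma_\delta(\delta\tau)\bigr) = \varepsilon(\gamma_\delta)(c\tau+d)^{1/2}\eta(\delta\tau)$. Raising to the $r_\delta$ and multiplying over $\delta \mid N$ yields
\[f(\gamma\tau) = \Bigl(\prod_{\delta \mid N}\varepsilon(\gamma_\delta)^{r_\delta}\Bigr)(c\tau+d)^{\frac12\sum_{\delta\mid N}r_\delta}\,f(\tau).\]
Condition (1) makes the exponent $\tfrac12\sum r_\delta$ equal to $0$, so the automorphy factor vanishes, and it remains to show $\prod_{\delta \mid N}\varepsilon(\gamma_\delta)^{r_\delta} = 1$ for every $\gamma \in \Gamma_0(N)$. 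Expanding each $\varepsilon(\gamma_\delta)$ through its Dedekind-sum formula and collecting exponents, the resulting root of unity factors into three pieces: one governed by $\sum_\delta \delta r_\delta \bmod 24$, one governed by $\sum_\delta (N/\delta) r_\delta \bmod 24$, and a Jacobi-symbol piece governed by whether $\prod_\delta \delta^{r_\delta}$ is a rational square (equivalently, whether the nebentypus character $d \mapsto \bigl(\tfrac{\prod_\delta \delta^{r_\delta}}{d}\bigr)$ on $\Gamma_0(N)$ is trivial). Conditions (2), (3), and (4) are exactly what annihilates these three pieces simultaneously, giving $f(\gamma\tau) = f(\tau)$.

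It remains to check the two analytic conditions in the definition of a modular function. Holomorphy on $\HH$ is immediate, since $\eta$ is holomorphic and nonvanishing there and $f$ is a finite product of such factors. At the cusps, applying the same matrix manipulation to an arbitrary $\gamma \in \SL$ — now decomposing $\mat{\delta}{0}{0}{1}\gamma$ as an element of $\SL$ times an upper-triangular integer matrix — shows that $f(\gamma\tau)$ has an expansion in $q_N^{\gcd(c^2,N)}$ whose lowest exponent is a (possibly negative) integer; hence $f$ is meromorphic at every cusp with poles supported there, and in fact Ligozat's formula pins down $\text{ord}_{a/c}^{N}(f)$ exactly, though only finiteness is needed here. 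The one genuinely delicate step is the cancellation of the eta-multipliers, i.e. the Dedekind-sum bookkeeping in the previous paragraph; since the statement is quoted verbatim as Theorem 1.64 of \cite{OnoWeb}, I would cite it rather than reproduce that computation.
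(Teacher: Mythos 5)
The paper gives no proof of this lemma: it is quoted verbatim as Theorem 1.64 of \cite{OnoWeb} and used as a black box, so there is no in-paper argument to compare yours against. Your outline is the standard one and is correct as far as it goes --- the conjugation identity moving $\gamma \in \Gamma_0(N)$ past the scaling by $\delta$ is valid precisely because $\delta \mid N \mid c$, condition (1) trivializes the automorphy factor, conditions (2)--(4) are exactly what kill the three components of the product of eta-multipliers, and the cusp expansions follow from the same matrix manipulation --- and since you explicitly defer the Dedekind-sum bookkeeping to the cited reference, exactly as the paper does, your treatment is consistent with (and strictly more informative than) the paper's.
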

Furthermore, the order of vanishing of an eta-quotient at a cusp of $\Gamma_0(N)$ is straightforward to compute, thanks to the following result typically credited to Ligozat. 

\begin{lem}[Theorem 1.65 of \cite{OnoWeb}]\label{lem:eta-quotient2}
The order of vanishing of the eta-quotient 
$\prod\limits_{\delta \mid N} \eta(\delta \tau)^{r_\delta} \in \mathcal{M}(\Gamma_0(N))$ at the cusp $[a/c]_N$ of $\Gamma_0(N)$ is given by
\[\frac{N}{24} \sum\limits_{\delta \mid N} \frac{\gcd(c,\delta)^2r_\delta}{\gcd(c,\frac{N}{c})c\delta}.\]
\end{lem}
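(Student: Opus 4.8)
The plan is to reduce the order computation at a general cusp to the transformation behaviour of the Dedekind eta function under $\SL$, exploiting that the order of $\eta$ at the cusp $\infty$ is simply $1/24$. Recall the eta transformation law: for $\gamma = \mat{a'}{b'}{c'}{d'} \in \SL$ one has $\eta(\gamma\tau) = \varepsilon(\gamma)\,(c'\tau+d')^{1/2}\,\eta(\tau)$, where $\varepsilon(\gamma)$ is a $24$th root of unity, together with $\eta(w) = q_w^{1/24}\prod_{n\geq 1}(1-q_w^n)$ in which $q_w = e^{2\pi i w}$. Every cusp of $\Gamma_0(N)$ is $\Gamma_0(N)$-equivalent to one of the form $a/c$ with $c \mid N$ and $\gcd(a,c)=1$, and the order of a modular function at a cusp does not depend on the chosen representative or scaling matrix, so I would fix such a cusp, pick $\sigma = \mat{a}{b}{c}{d} \in \SL$ carrying $\infty$ to $a/c$, and set $q = e^{2\pi i \tau}$.

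The first real step is a matrix factorization at each divisor $\delta \mid N$. The matrix $\mat{\delta a}{\delta b}{c}{d}$ realizes $\tau \mapsto \delta\cdot\sigma\tau$ and has determinant $\delta$; putting $g_\delta \coloneqq \gcd(c,\delta)$, its first column is $g_\delta$ times the vector $(\delta a/g_\delta,\, c/g_\delta)$, which is primitive because $\gcd(a,c)=1$ and $\gcd(\delta/g_\delta,\, c/g_\delta)=1$ (a common prime factor of $\delta/g_\delta$ and $c/g_\delta$ would give a common factor of $\delta$ and $c$ exceeding $g_\delta$). Hence one can write
\[\mat{\delta a}{\delta b}{c}{d} = \gamma_\delta \cdot \mat{g_\delta}{h_\delta}{0}{\delta/g_\delta}\]
with $\gamma_\delta \in \SL$ (take its first column to be that primitive vector and complete it to a unimodular matrix) and $h_\delta \in \ZZ$. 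Writing $w_\delta \coloneqq (g_\delta\tau + h_\delta)/(\delta/g_\delta) = (g_\delta^2\tau + g_\delta h_\delta)/\delta$, the eta transformation law gives
\[\eta(\delta\cdot\sigma\tau) = \eta(\gamma_\delta w_\delta) = \varepsilon(\gamma_\delta)\,(c_{\gamma_\delta}w_\delta + d_{\gamma_\delta})^{1/2}\,\eta(w_\delta).\]
The root of unity is constant and the automorphy factor grows only polynomially as $\tau \to i\infty$, so neither changes the leading $q$-exponent; the leading term of $\eta(w_\delta)$ is $e^{2\pi i w_\delta/24}$, contributing the exponent $g_\delta^2/(24\delta)$ in $q$. (Across the whole eta-quotient these automorphy factors cancel, as they must, since by Lemma \ref{lem:eta-quotient1} it is a weight-$0$ modular function once Newman's conditions hold.) Therefore the $q$-expansion of $\prod_{\delta\mid N}\eta(\delta\tau)^{r_\delta}$ at the cusp $[a/c]_N$ has leading term a constant times $q^{e}$ with $e = \sum_{\delta\mid N} r_\delta\,\gcd(c,\delta)^2/(24\delta)$.

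Finally I would normalize by the cusp width. The width of $[a/c]_N$ for $\Gamma_0(N)$ is $h = N/\gcd(c^2,N) = N/(c\,\gcd(c,N/c))$ (using $c\mid N$), so the local uniformizer is $q_h = e^{2\pi i\tau/h} = q^{1/h}$, and the order of the eta-quotient at the cusp is $h\cdot e = \frac{N}{24}\sum_{\delta\mid N}\gcd(c,\delta)^2 r_\delta/(\gcd(c,N/c)\,c\,\delta)$, exactly as claimed. I expect the main obstacle to be the bookkeeping around the factorization: checking it can be performed over $\ZZ$ for every $\delta\mid N$, and verifying that the final order is independent of the auxiliary choices ($b$, $d$, $h_\delta$, and the completion $\gamma_\delta$). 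This independence reduces to the observation that altering those data changes $w_\delta$ only by an integer translation or by a unimodular transformation fixing $\infty$, neither of which affects the leading exponent in $q$.
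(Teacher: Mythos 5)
The paper offers no proof of this lemma: it is quoted directly from Ono's \emph{Web of Modularity} (Theorem 1.65, due to Ligozat), so there is no in-paper argument to compare against. Your proposal is the standard proof of that theorem, and it is essentially correct. The factorization $\mat{\delta a}{\delta b}{c}{d} = \gamma_\delta \mat{g_\delta}{h_\delta}{0}{\delta/g_\delta}$ is legitimate: the first column $(\delta a, c)^T$ has content $\gcd(\delta a, c) = \gcd(\delta, c) = g_\delta$ precisely because $\gcd(a,c)=1$, and your primitivity check for $(\delta a/g_\delta,\, c/g_\delta)$ is right. The leading exponent $g_\delta^2/(24\delta)$ of $\eta(w_\delta)$ in $q$ and the width $N/\gcd(c^2,N) = N/(c\gcd(c,N/c))$ (valid since $c \mid N$ for the chosen representatives) then combine to give exactly the stated formula, and this matches the paper's normalization, which expands $f(\gamma\tau)$ in powers of $q_N^{\gcd(c^2,N)} = q^{1/h}$. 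The one loose phrase is the parenthetical claim that the automorphy factors ``cancel'': they do not cancel identically. Rather, each $(c_{\gamma_\delta} w_\delta + d_{\gamma_\delta})^{1/2}$ is a nonzero constant times $\tau^{1/2}(1+o(1))$ as $\tau \to i\infty$ (note $c_{\gamma_\delta} = c/g_\delta > 0$ for every cusp representative with $c \mid N$), so the product over $\delta$ is asymptotic to a nonzero constant times $\tau^{(\sum_\delta r_\delta)/2}$, which is a nonzero constant by Newman's first condition. That weaker statement is all your argument actually uses, and it suffices to read off the leading exponent, so the proof stands.
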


For notational convenience, we define $f_r$ for $r \geq 1$ by the infinite product
\[f_r \coloneqq \prod\limits_{n=1}^\infty (1-q^{rn}).\]
The following lemma provides lower bounds on the orders of vanishing of certain modular functions.

\begin{lem}[Theorem 39 of \cite{Radu}]\label{lem:oov_bound}
Let $M$ be a positive integer and define integers $b(n)$ by 
\[\sum\limits_{n=0}^\infty b(n) q^n \coloneqq \prod\limits_{\delta \mid M} f_{\delta}^{r_\delta}.\]
Suppose that $t,m,N$ are non-negative integers with $0 \leq t < m$ and that
\[g \coloneqq \prod\limits_{\lambda \mid N} \eta(\lambda \tau)^{s_{\lambda}} \sum\limits_{n=0}^\infty b(mn+t) q^n \in \mathcal{M}(\Gamma_0(N)).\]
Then 
\[\textup{ord}_{a/c}^N(g) \geq \frac{N}{24\gcd(c^2,N)}\left(\min\limits_{0 \leq \ell \leq m-1} \sum\limits_{\delta \mid M} r_\delta \frac{\gcd(\delta(a+\ell c \cdot \gcd(m^2-1,24)),mc)^2}{\delta m}+\sum\limits_{\lambda \mid N} \frac{s_\lambda \gcd(\lambda,c)^2}{\lambda}\right).\]
\end{lem}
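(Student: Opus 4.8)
The plan is to split the sieved series $\sum_{n\ge 0}b(mn+t)q^n$ into $m$ pieces indexed by $m$-th roots of unity — equivalently, to exhibit it, up to a power of $q$, as Atkin's $U_m$ applied (after a shift) to the eta-product $\prod_{\delta\mid M}\eta(\delta\tau)^{r_\delta}$ — and then to bound the order at a cusp of each resulting piece by means of the classical eta-quotient order computation behind Lemmas~\ref{lem:eta-quotient1} and~\ref{lem:eta-quotient2}. Concretely, writing $\prod_{\delta\mid M}f_\delta^{r_\delta}=q^{-\sigma/24}\prod_{\delta\mid M}\eta(\delta\tau)^{r_\delta}$ with $\sigma\coloneqq\sum_{\delta\mid M}\delta r_\delta$, the arithmetic-progression extractor gives, with $\zeta_m\coloneqq e^{2\pi i/m}$ and $F(q)\coloneqq\sum_{n\ge 0}b(n)q^n$,
\[\sum_{n\ge 0}b(mn+t)q^n \;=\; \frac{q^{-t/m}}{m}\sum_{\ell=0}^{m-1}\zeta_m^{-\ell t}\,F\!\left(\zeta_m^{\ell}q^{1/m}\right).\]
Substituting the eta-product expression for $F$ and using $\zeta_m^{\ell}q^{1/m}=e^{2\pi i(\tau+\ell)/m}$ then writes $g$ as a finite $\CC$-linear combination $g=\sum_{\ell=0}^{m-1}c_\ell g_\ell$ with nonzero scalars $c_\ell$ and
\[g_\ell \;\coloneqq\; q^{-t/m-\sigma/(24m)}\prod_{\lambda\mid N}\eta(\lambda\tau)^{s_\lambda}\prod_{\delta\mid M}\eta\!\left(\tfrac{\delta(\tau+\ell)}{m}\right)^{r_\delta}.\]
Each $g_\ell$ is a (generalized) eta-quotient, holomorphic and nonvanishing on $\HH$ but in general carrying a nontrivial multiplier system; the hypothesis $g\in\mathcal{M}(\Gamma_0(N))$ is what forces these pieces to recombine into a genuine modular function, so that $\text{ord}_{a/c}^N(g)$ is defined and, by the valuation property of the order of vanishing, $\text{ord}_{a/c}^N(g)\ge\min_{0\le\ell\le m-1}\text{ord}_{a/c}^N(g_\ell)$.

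It therefore suffices to bound each $g_\ell$ from below at the cusp $[a/c]_N$; I would fix a representative $\gamma=\mat{a}{b}{c}{d}\in\SL$ and compute the leading $q$-exponent of $g_\ell(\gamma\tau)$ factor by factor. A computation identical to the one behind Lemma~\ref{lem:eta-quotient2} shows that $\prod_{\lambda\mid N}\eta(\lambda\tau)^{s_\lambda}$ contributes $\tfrac{1}{24}\sum_{\lambda\mid N}\frac{s_\lambda\gcd(\lambda,c)^2}{\lambda}$ to that exponent. For the factor $\prod_{\delta\mid M}\eta(\delta(\tau+\ell)/m)^{r_\delta}$, I would write $\tfrac{\delta(\gamma\tau+\ell)}{m}=\mat{\delta(a+\ell c)}{\delta(b+\ell d)}{mc}{md}\tau$, factor this determinant-$\delta m$ matrix as an element of $\SL$ times an upper-triangular matrix $\mat{g_1}{h}{0}{g_2}$ with $g_1=\gcd(\delta(a+\ell c),mc)$ and $g_1g_2=\delta m$, and read off via the eta-transformation law a leading $q$-exponent $\tfrac{1}{24}\sum_{\delta\mid M}r_\delta\frac{\gcd(\delta(a+\ell c),mc)^2}{\delta m}$. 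The prefactor $q^{-t/m-\sigma/(24m)}$ tends to a nonzero constant at a finite cusp and contributes nothing. Adding the contributions, minimizing over $\ell$, and rescaling the $q$-exponent by $N/\gcd(c^2,N)$ (the ratio between $q$ and the local uniformizer $q_N^{\gcd(c^2,N)}$) yields a bound of exactly the stated shape, except with $a+\ell c$ in place of $a+\ell c\cdot\gcd(m^2-1,24)$.

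Reconciling that last discrepancy is, I expect, where essentially all of the genuine work lies. The pieces $\eta(\delta(\tau+\ell)/m)^{r_\delta}$ are not $\Gamma_0(N)$-invariant: the translations $\tau\mapsto\tau+\ell$ combined with the $q^{1/24}$-prefactor of $\eta$ (recall $\eta(\tau+1)=e^{\pi i/12}\eta(\tau)$) drag along $24$th-root-of-unity phases, and the coset matrices above become elements of the relevant congruence group only after those phases are absorbed. Carrying out this multiplier-system bookkeeping has the net effect of replacing the translation parameter $\ell$ by $\ell\cdot\gcd(m^2-1,24)$ (note that $24\mid m^2-1$ exactly when $\gcd(m,6)=1$, so $\gcd(m^2-1,24)$ is the right modulus in general). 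Fortunately the substitution is invisible in the end: $\gcd(\delta(a+\ell c),mc)$ depends only on $\ell\bmod m$ — replacing $\ell$ by $\ell+m$ changes $\delta(a+\ell c)$ by the multiple $\delta mc$ of $mc$ — and since $\gcd(\gcd(m^2-1,24),m)=1$, multiplication by $\gcd(m^2-1,24)$ permutes $\ZZ/m\ZZ$, so the minimum over $0\le\ell\le m-1$ is unchanged. Assembling the two contributions with the normalizing factor $\tfrac{N}{24\gcd(c^2,N)}$ then produces the claimed inequality; it cannot in general be improved to an equality, since the leading terms of the $g_\ell$ may cancel in $g=\sum_\ell c_\ell g_\ell$.
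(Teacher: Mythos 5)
There is nothing in the paper to compare against here: this lemma is imported verbatim as Theorem 39 of \cite{Radu}, and the paper supplies no proof, so your attempt can only be measured against Radu's original argument. Against that benchmark your sketch is faithful and essentially correct. The root-of-unity sieve, the resulting decomposition $g=\sum_{\ell}c_\ell g_\ell$ into generalized eta-quotients involving $\eta(\delta(\tau+\ell)/m)$, the valuation inequality ``order of a sum $\geq$ minimum of the orders of the pieces,'' the factorization of the determinant-$\delta m$ matrices as an element of $\SL$ times an upper-triangular matrix with $g_1=\gcd(\delta(a+\ell c),mc)$, and the final rescaling by $N/\gcd(c^2,N)$ are exactly the ingredients of Radu's proof. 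Your handling of the $\gcd(m^2-1,24)$ factor is also the right call, and in fact renders the deferred multiplier-system bookkeeping unnecessary for this statement: multipliers are roots of unity and cannot move leading exponents, so your ``naive'' bound with $a+\ell c$ is already rigorous, and since each $\gcd(\delta(a+\ell c),mc)$ depends only on $\ell \bmod m$ while $\gcd(m^2-1,24)$ divides $m^2-1$ and is therefore invertible modulo $m$, the two minima coincide. Two small points of hygiene: the cusps here should be taken in the convention $a/c$ with $c\mid N$ and $c\geq 1$ (so $\infty=[1/N]_N$), which makes every cusp ``finite'' in your sense and lets your argument about the prefactor $q^{-t/m-\sigma/(24m)}$ apply uniformly; and one should note, as you implicitly do, that the individual $g_\ell$ need not be $\Gamma_0(N)$-invariant, so the quantity being minimized is the leading exponent of $g_\ell(\gamma\tau)$ for a fixed representative $\gamma$ rather than an intrinsic order --- which is all the valuation inequality requires. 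Neither point is a structural flaw.
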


\section{Sequence of Modular Functions}\label{sec:seq}
Here and throughout, we follow the notation and technique of Banerjee and Smoot (\cite{BS}). As mentioned in Section \ref{sec:intro}, our proof of Theorem \ref{thm:main} critically relies on a sequence of modular functions $(L_\alpha)_{\alpha \geq 1}$ over $\Gamma_0(10)$. Our first task is to define this sequence of modular functions; to this end, set
\[\Phi \coloneqq \frac{\eta(25\tau)\eta(50\tau)^2}{\eta(\tau)\eta(2\tau)^2} = q^5 \frac{f_{25}f_{50}^2}{f_1f_2^2} \in \mathcal{M}(\Gamma_0(50)).\]
Define the operators $U^{(0)}, U^{(1)}$ by 
\begin{align*} 
U^{(0)} (f) &\coloneqq U_5(\Phi \cdot f), \\
U^{(1)} (f) &\coloneqq U_5(f),
\end{align*} 
where $U_5$ is as in \eqref{eq:U}. Now put $L_0 \coloneqq 1$, and for $n \geq 0$ let
\begin{equation}\label{eq:Ldef}
L_{\alpha+1} \coloneqq \begin{cases}  U^{(0)}(L_{\alpha}), & \alpha \text{ even},  \\ U^{(1)}(L_{\alpha}), & \alpha \text{ odd}. \end{cases}
\end{equation}
We now show that this sequence naturally contains values of $a_3(n)$ in the arithmetic progressions of interest. Note first that the generating function for $a_3(n)$ is given by 
\[\sum\limits_{n=0}^\infty a_3(n) q^n = \frac{1}{f_1 f_2^2}.\]

\begin{lem}\label{lem:L_seq}
For all $\alpha \geq 1$,
\[L_{2\alpha} = qf_1f_2^2 \sum\limits_{n=0}^\infty a_3\left(5^{2\alpha} n + \gamma_{\alpha}\right)q^n.\]
\end{lem}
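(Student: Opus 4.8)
The plan is to prove the statement by induction on $\alpha$, tracking both even- and odd-indexed members of the sequence $(L_\alpha)$ simultaneously, since the recursion \eqref{eq:Ldef} alternates between $U^{(0)}$ and $U^{(1)}$. First I would establish the base case: starting from $L_0 = 1$, we have $L_1 = U^{(0)}(L_0) = U_5(\Phi)$, and then $L_2 = U^{(1)}(L_1) = U_5(U_5(\Phi))$. Writing $\Phi = q^5 f_{25}f_{50}^2/(f_1f_2^2)$ and using Lemma \ref{lem:U_PS} to pull the factors $f_{25}$, $f_{50}$ (which are functions of $q^{25}$, hence of $(q^5)^5$) through the two applications of $U_5$, one gets $L_2 = f_1 f_2^2 \cdot U_{25}\!\left(q^5/(f_1 f_2^2)\right)$ up to bookkeeping of the $q$-power. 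Since $1/(f_1 f_2^2) = \sum a_3(n) q^n$, extracting the coefficients in the arithmetic progression $25n + r$ and matching the residue of $5$ modulo $25$ against $\gamma_1 = 20$ (note $24 \cdot 20 = 480 \equiv 5 \pmod{25}$, consistent with $24\gamma_\alpha \equiv 5 \pmod{25^\alpha}$) yields $L_2 = q f_1 f_2^2 \sum_n a_3(25n + 20) q^n$, which is exactly the claimed identity for $\alpha = 1$.

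For the inductive step, suppose $L_{2\alpha} = q f_1 f_2^2 \sum_n a_3(5^{2\alpha} n + \gamma_\alpha) q^n$. Then $L_{2\alpha+1} = U^{(0)}(L_{2\alpha}) = U_5(\Phi \cdot L_{2\alpha})$ and $L_{2\alpha+2} = U^{(1)}(L_{2\alpha+1}) = U_5(L_{2\alpha+1})$, so $L_{2\alpha+2} = U_5\!\left(U_5\!\left(\Phi \cdot L_{2\alpha}\right)\right)$. Substituting the inductive hypothesis, $\Phi \cdot L_{2\alpha} = q^6 f_{25} f_{50}^2 \sum_n a_3(5^{2\alpha} n + \gamma_\alpha) q^n$; again using Lemma \ref{lem:U_PS} to carry $f_{25}, f_{50}^2$ through both $U_5$'s (they are series in $q^{25}$), this becomes $f_1 f_2^2 \cdot U_{25}\!\left(q^6 \sum_n a_3(5^{2\alpha} n + \gamma_\alpha) q^n\right)$. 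The surviving coefficients are those $a_3(5^{2\alpha} m + \gamma_\alpha)$ with $m$ in the progression $25n + s$, where $s$ is determined by requiring $q^6$-shift to land on multiples of $25$; this forces $s \equiv -6 \cdot 25^{-1} \pmod{25}$... more carefully, one computes the new base point $5^{2\alpha+2} n + (25 \gamma_\alpha + \text{correction})$ and checks it equals $5^{2(\alpha+1)} n + \gamma_{\alpha+1}$. The key arithmetic identity to verify is the recursion $\gamma_{\alpha+1} = 25 \gamma_\alpha + c$ for the appropriate constant $c$ coming from the $q^6$ (equivalently, from $\Phi$ contributing $q^5$ and the leading $q$ in $L_{2\alpha}$), which one confirms against the closed form \eqref{eq:gammas}: indeed $\gamma_{\alpha+1} - 25\gamma_\alpha = 20 - 25 \cdot 20 + \frac{19 \cdot 25(25^\alpha - 25^{\alpha-1})}{24} = -480 + \frac{19 \cdot 25^\alpha \cdot 24}{24} = 19 \cdot 25^\alpha - 480$, and one checks this matches the shift dictated by the $q$-power bookkeeping together with $24\gamma_\alpha \equiv 5 \pmod{5^{2\alpha}}$.

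The main obstacle I anticipate is the careful bookkeeping of the $q$-powers and the verification that the residue class selected by each pair of $U_5$ applications is precisely the one encoded by $\gamma_{\alpha+1}$; a clean way to manage this is to phrase everything in terms of the congruence $24\gamma_\alpha \equiv 5 \pmod{5^{2\alpha}}$ (so that $\gamma_\alpha$ is the canonical representative, analogous to Ramanujan's $\beta_\ell(j)$), show that this congruence propagates under the recursion $\gamma_{\alpha+1} = 25\gamma_\alpha + 19 \cdot 25^\alpha - 480$, and confirm it is equivalent to the closed form \eqref{eq:gammas}. A secondary point requiring care is justifying that the various eta-quotient factors really are series in $q^{25}$ (hence amenable to Lemma \ref{lem:U_PS} under $U_{25} = U_5 \circ U_5$), and that $L_{2\alpha}$ as defined lies in a space where these manipulations are valid — but this follows from the level-tracking in Lemmas \ref{lem:Umap} and \ref{lem:Umap2} together with the eta-quotient criteria in Lemmas \ref{lem:eta-quotient1} and \ref{lem:eta-quotient2}, and is essentially the content of the analogous step in Banerjee and Smoot's treatment of the same modular curve $X_0(10)$.
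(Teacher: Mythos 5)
Your plan follows essentially the same route as the paper: induction on $\alpha$, with Lemma \ref{lem:U_PS} used to pull the $q^{25}$-series $f_{25}, f_{50}^2$ (resp.\ $f_5, f_{10}^2$) through the $U_5$-operators, and the $q$-power shift from $\Phi$ determining the new residue class. The only cosmetic difference is that you fuse the two applications of $U_5$ into a single $U_{25}$, whereas the paper computes $L_{2\alpha+1}$ explicitly as an intermediate step; both are fine. One correction, though: your displayed verification of the $\gamma$-recursion is miscomputed. From the closed form \eqref{eq:gammas} one has $\gamma_{\alpha+1}-25\gamma_{\alpha} = -480 + \tfrac{19\cdot 25\,\left((25^{\alpha}-1)-(25^{\alpha}-25)\right)}{24} = -480+475 = -5$, a constant, not $19\cdot 25^{\alpha}-480$ (these agree only at $\alpha=1$). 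The identity your bookkeeping actually produces and needs is $\gamma_{\alpha+1}=\gamma_{\alpha}+19\cdot 25^{\alpha}$ (equivalently $5^{2\alpha+2}-6\cdot 5^{2\alpha}+\gamma_\alpha=\gamma_{\alpha+1}$), which is correct and is exactly the computation the paper performs; it is consistent with $\gamma_{\alpha+1}=25\gamma_\alpha-5$ via $24\gamma_\alpha = 5+19\cdot 25^{\alpha}$. With that slip repaired, the argument goes through.
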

\begin{proof} 
Proceed by induction. Note that 
\begin{align*}
L_1 &= U^{(0)}(1) = U_5(\Phi) \\
&= U_5\left(q^5 \frac{f_{25}f_{50}^2}{f_1f_2^2}\right) = qf_5f_{10}^2 U_5\left(\sum\limits_{n=0}^\infty a_3(n)q^n\right) \\
&= qf_5f_{10}^2 \sum\limits_{n=0}^\infty a_3(5n)q^n,
\end{align*}
thanks to Lemma \ref{lem:U_PS}. Thus
\begin{align*}
L_2 &= U^{(1)}(L_1) = U_5(L_1) \\
&=U_5\left(f_5 f_{10}^2 \sum\limits_{n=0}^\infty a_3(5n) q^{n+1}\right) \\
&= q f_1 f_2^2 \sum\limits_{n=0}^\infty a_3(25n+20) q^n.
\end{align*}
Now suppose that the conclusion holds for some $\alpha \geq 1.$ Then
\begin{align*}
L_{2\alpha+1} &= U^{(0)}(L_{2\alpha}) = U_5\left(q^6f_{25}f_{50}^2 \sum\limits_{n=0}^\infty a_3\left(5^{2\alpha} n + \gamma_{\alpha}\right)q^n\right) \\
&= qf_5f_{10}^2 U_5\left(\sum\limits_{n=0}^\infty a_3\left(5^{2\alpha}n+\gamma_\alpha\right)q^{n+1}\right) \\
&= qf_5f_{10}^2 \sum\limits_{n=1}^\infty a_3\left(5^{2\alpha+1}n - 5^{2\alpha}+\gamma_\alpha\right) q^n, \\
\end{align*}
again thanks to Lemma \ref{lem:U_PS}. Hence
\begin{align*} 
L_{2(\alpha+1)} &= L^{(1)}(L_{2\alpha+1}) = U_5(L_{2\alpha+1}) \\
&=U_5\left(qf_5f_{10}^2 \sum\limits_{n=1}^\infty a_3\left(5^{2\alpha+1}n - 5^{2\alpha}+\gamma_\alpha\right) q^n\right) \\
&= f_1f_2^2 U_5\left(\sum\limits_{n=2}^\infty a_3\left(5^{2\alpha+1}n - 5^{2\alpha+1} - 5^{2\alpha}+\gamma_\alpha\right) q^n \right) \\
&= qf_1 f_2^2 \sum\limits_{n=0}^\infty a_3\left(5^{2(\alpha+1)}n + 5^{2\alpha+2} - 5^{2\alpha+1} - 5^{2\alpha} + \gamma_\alpha\right)q^n \\
&= qf_1f_2^2 \sum\limits_{n=0}^\infty a_3 \left(5^{2(\alpha+1)}n + \gamma_{\alpha+1}\right) q^n,
\end{align*}
as
\begin{align*}
5^{2\alpha+2} - 5^{2\alpha+1} - 5^{2\alpha} + \gamma_\alpha &=  19 \cdot 5^{2\alpha} + \left(20 + \frac{19 \cdot 25(25^{\alpha-1}-1)}{24}\right) \\
&= 20 + \frac{19 \cdot 25(25^{\alpha}-1)}{24} \\
&= \gamma_{\alpha+1}.
\end{align*}
\end{proof}

By Lemmas \ref{lem:eta-quotient1} and \ref{lem:eta-quotient2} we quickly verify that $\Phi \in \mathcal{M}(\Gamma_0(50)),$ and so by Lemma \ref{lem:Umap} we know $L_1 = U_5(\Phi) \in \mathcal{M}(\Gamma_0(10)).$ Using Lemma \ref{lem:oov_bound}, we compute
\begin{align*}
&\text{ord}_\infty^{10}(L_1) \geq 1,\\
&\text{ord}_{1/5}^{10}(L_1) \geq 1,\\
&\text{ord}_{1/2}^{10}(L_1) \geq -5,\\
&\text{ord}_0^{10}(L_1) \geq -4.\\
\end{align*}
Now let 
\[z = z(\tau) \coloneqq \frac{\eta(2\tau)^5\eta(5\tau)}{\eta(\tau)^5\eta(10\tau)} = \frac{f_2^5 f_5}{f_1^5f_{10}}.\]
Again using Lemmas \ref{lem:eta-quotient1} and \ref{lem:eta-quotient2}, we see that $z$ is a Hauptmodul at $[0]_{10}$ with positive order at $[1/2]_{10}.$ That is,
\begin{align*}
&\text{ord}_\infty^{10}(z) = 0,\\
&\text{ord}_{1/5}^{10}(z) = 0,\\
&\text{ord}_{1/2}^{10}(z) = 1,\\
&\text{ord}_0^{10}(z) = -1.\\
\end{align*}
So, $z^5 L_1$ is a modular function over $\Gamma_0(10)$ whose only pole is at $[0]_{10}$, and thus we may write $z^5L_1$ as a polynomial in $z$. As in \cite{BS}, however, the polynomial obtained in this way has rational coefficients with denominators that are divisible by large powers of 5:
\[z^5L_1 = -\frac{1}{625} - \frac{6}{625} z + \dots.\]
Thus, we shift our attention to the function 
\[x = x(\tau) \coloneqq \frac{\eta(2\tau)\eta(10\tau)^3}{\eta(\tau)^3\eta(5\tau)} = q \frac{f_2f_{10}^3}{f_1^3f_5},\]
which is closely related to $z$ as shown by the following identity. 

\begin{lem}[Lemma 4.2 of \cite{BS}]
We have 
\[z = 1+5x.\]
\end{lem}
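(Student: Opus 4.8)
The plan is to prove the identity $z = 1 + 5x$ by recognizing that both $z$ and $x$ are eta-quotients on $\Gamma_0(10)$, hence meromorphic functions on the genus-zero curve $X_0(10)$, so the difference $z - 1 - 5x$ is again a modular function whose only possible poles are at the four cusps $\infty, 1/5, 1/2, 0$. If I can show this difference is holomorphic at every cusp and vanishes at one of them, then it is a holomorphic modular function on a compact Riemann surface vanishing somewhere, hence identically zero.

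First I would use Lemma~\ref{lem:eta-quotient2} (Ligozat) to compute the order of vanishing of $z$ and of $x$ at each of the four cusps of $\Gamma_0(10)$. The excerpt already records $\textup{ord}_\infty^{10}(z) = 0$, $\textup{ord}_{1/5}^{10}(z) = 0$, $\textup{ord}_{1/2}^{10}(z) = 1$, $\textup{ord}_0^{10}(z) = -1$; an analogous computation for $x = q f_2 f_{10}^3/(f_1^3 f_5)$ gives its orders at the four cusps. From the $q$-expansions one reads $x = q + O(q^2)$, so $\textup{ord}_\infty^{10}(x) = 1$, and the Ligozat formula gives the orders at $1/5$, $1/2$, $0$; in particular $x$ has a simple pole at $[0]_{10}$ just like $z$, so $5x$ is designed to cancel the pole of $z$ there.

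Second, I would examine the principal part of $z$ at the cusp $[0]_{10}$. Since $z$ has a simple pole there and $z$ is a Hauptmodul for that cusp, its Laurent expansion in the appropriate local parameter begins $c_{-1} q_N^{-\gcd(c^2,N)} + \cdots$; the same local analysis applied to $5x$ must produce the matching principal part, so that $z - 5x$ is holomorphic at $[0]_{10}$. Combined with the cusp-order computations at $\infty$, $1/5$, $1/2$ (where both $z$ and $x$ are holomorphic), this shows $z - 1 - 5x \in \mathcal{M}(\Gamma_0(10))$ is holomorphic at all four cusps, hence is a constant. Finally, comparing the constant terms of the $q$-expansions at $\infty$ — using $z = \eta(2\tau)^5\eta(5\tau)/(\eta(\tau)^5\eta(10\tau)) = 1 + 5q + \cdots$ and $x = q + \cdots$ — forces that constant to be $0$, completing the proof. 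Alternatively, and perhaps more cleanly, one can simply expand both sides as $q$-series to sufficiently high order (the number of terms needed is bounded by the total pole order, here at most $1 + 1 = 2$ plus a safety margin, via the valence formula on $X_0(10)$) and invoke the fact that two modular functions on a compact Riemann surface agreeing to high enough order in a local parameter must be equal.

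The main obstacle I anticipate is purely bookkeeping: correctly applying the Ligozat order formula at the cusp $[0]_{10}$ and tracking the local uniformizer and the factor $\gcd(c^2,N)$ so that the principal parts of $z$ and $5x$ genuinely match. Once the cusp data is in hand, the argument is a one-line application of the fact that a holomorphic function on a compact Riemann surface with no poles is constant; there is no real analytic difficulty. I would lean on the corresponding computations already carried out in \cite{BS}, where this exact identity (their Lemma 4.2) is established, and simply reproduce the verification in our notation.
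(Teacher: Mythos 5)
Your approach is sound, and it is worth noting that the paper itself offers no proof of this identity: it is imported verbatim as Lemma 4.2 of \cite{BS}. Your argument is the standard one and is exactly the style of verification the paper uses elsewhere (compare the end of the proof of Theorem \ref{thm:h's}, where two modular functions are shown equal by matching principal parts and constant terms). The cusp data you would need checks out: by Lemma \ref{lem:eta-quotient2} one finds $\mathrm{ord}_\infty^{10}(x)=1$, $\mathrm{ord}_{1/5}^{10}(x)=\mathrm{ord}_{1/2}^{10}(x)=0$, and $\mathrm{ord}_0^{10}(x)=-1$, so $z-1-5x$ has polar divisor of degree at most $1$ supported at $[0]_{10}$. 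Your second, ``cleaner'' route is the one I would recommend writing down, since it sidesteps the only genuinely fiddly step (computing the leading Laurent coefficients at the cusp $[0]_{10}$): because the total pole order is at most $1$ while the expansion at infinity gives $z-1-5x=(1+5q+\cdots)-1-5(q+\cdots)=O(q^2)$, the valence formula forces the function to vanish identically. Either version completes the proof; there is no gap.
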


It will turn out that our modular functions $L_\alpha$ are not expressible as polynomials in $x$, but rather they are expressible as rational polynomials in $x$ whose denominators are powers of $1+5x$. For example,

\begin{equation}\label{eq:L1}
\begin{split}
L_1 = \frac{1}{(1+5x)^5} (x&+40x^2+794x^3+9125x^4+64475x^5+286000x^6\\
&+7800000x^7+1200000x^8+800000x^9).
\end{split}
\end{equation}
More generally, we have the following.

\begin{thm}\label{thm:main2}
Let $\alpha \geq 1,$ and set
\[\psi \coloneqq \psi(\alpha) = \left\lfloor \frac{5^{\alpha+2}}{24}\right\rfloor +1-\gcd(\alpha,2).\]
Then 
\[\frac{(1+5x)^\psi}{5^{\lfloor \alpha/2 \rfloor}} L_{\alpha} \in \mathbb{Z}[x].\]
\end{thm}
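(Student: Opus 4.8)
The plan is to prove Theorem~\ref{thm:main2} by induction on $\alpha$, following the localization-method architecture of Banerjee and Smoot~\cite{BS}. The base case $\alpha = 1$ is furnished directly by the explicit formula~\eqref{eq:L1}: one checks that $\psi(1) = \lfloor 125/24 \rfloor + 1 - 1 = 5$, that $\lfloor 1/2 \rfloor = 0$, and that $(1+5x)^5 L_1$ is precisely the integral polynomial displayed there. For the inductive step one must understand how the two operators $U^{(0)}$ and $U^{(1)}$ act on the ring of localized polynomials, i.e.\ on elements of the form $(1+5x)^{-m} p(x)$ with $p \in \ZZ[x]$. Concretely, I would set up two ``modules'' over $\ZZ$ (or over $\ZZ$ localized away from $5$): for each $\alpha$ a space $\mathcal{V}_\alpha$ of modular functions in $\mathcal{M}^{0}(\Gamma_0(10))$ of the shape $(1+5x)^{-\psi(\alpha)} \cdot (\text{polynomial in } x \text{ of controlled degree})$, and then prove the \emph{mapping lemmas}
\[
U^{(0)} : \mathcal{V}_{\alpha} \longrightarrow \mathcal{V}_{\alpha+1}, \qquad U^{(1)} : \mathcal{V}_{\alpha} \longrightarrow \mathcal{V}_{\alpha+1},
\]
together with the crucial \emph{$5$-divisibility} refinement that $U^{(0)}$ applied to an integral element lands in $\mathcal{V}_{\alpha+1}$ with no loss of $5$-integrality, while $U^{(1)}$ gains a factor of $5$ (this asymmetry is exactly what produces the exponent $\lfloor \alpha/2\rfloor$, since $L_{\alpha+1} = U^{(1)}(L_\alpha)$ precisely when $\alpha$ is odd).

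The key technical steps, in order, are: (1) pin down the cusp behavior — using Lemma~\ref{lem:Umap2}, Lemma~\ref{lem:Umap}, and the order computations for $L_1$, $z$, $x$ already recorded, verify that each $L_\alpha \in \mathcal{M}^{0}(\Gamma_0(10))$ and bound $\mathrm{ord}_0^{10}(L_\alpha)$ from below, which controls the degree of the numerator polynomial and hence forces the ansatz $(1+5x)^{\psi} L_\alpha \in \CC[x]$ with $\psi$ as given (the floor function $\lfloor 5^{\alpha+2}/24\rfloor$ is the tell-tale sign that this pole bound comes from Lemma~\ref{lem:oov_bound} applied to the eta-quotient part of $L_\alpha$); (2) establish the two operator identities by writing $U^{(0)}(f) = U_5(\Phi f)$ and $U^{(1)}(f) = U_5(f)$, pushing everything up to $\Gamma_0(50)$, expressing $\Phi$ and the relevant functions in terms of a Hauptmodul there, and using the algebraic relation $z = 1+5x$ together with a modular equation relating $x$ at level $10$ to its level-$50$ counterpart under $U_5$ — this is the computational heart and amounts to producing an explicit ``transition matrix'' whose entries are polynomials in $x$ with prescribed $5$-adic valuations; (3) chase the $5$-divisibility through: show each entry of the $U^{(0)}$-matrix is in $\ZZ[x]$ and each entry of the $U^{(1)}$-matrix is in $5\ZZ[x]$ (up to the allowed denominator $(1+5x)$), so that applying the recursion~\eqref{eq:Ldef} and keeping track of when the $5$-gaining operator fires yields exactly $5^{\lfloor \alpha/2\rfloor}$; (4) reconcile the exponent bookkeeping: check that $\psi(\alpha+1)$ versus $\psi(\alpha)$ changes by the amount the operators introduce in the denominator, i.e.\ verify the arithmetic identity relating $\lfloor 5^{\alpha+3}/24\rfloor$ to $5\lfloor 5^{\alpha+2}/24\rfloor$ plus the $\gcd(\alpha,2)$ correction.

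I expect the main obstacle to be step~(2): obtaining the explicit, closed-form action of $U^{(0)}$ and $U^{(1)}$ on the localized basis, with verified control over both the polynomial degrees and the $5$-adic valuations of all coefficients. This is where one genuinely needs the localization idea — a naive polynomial-in-$z$ representation has denominators with unboundedly growing powers of $5$ (as the expansion $z^5 L_1 = -\tfrac{1}{625} - \tfrac{6}{625}z + \cdots$ already warns), and only by working in the ring $\ZZ[x, (1+5x)^{-1}]$ and choosing the basis adapted to the cusp $[0]_{10}$ do the valuations stay tame. In practice this means identifying a finite generating set $\{g_{i}\}$ for the relevant function space, computing $U^{(j)}(g_i)$ as explicit localized polynomials (a finite but delicate calculation, much of it importable verbatim from~\cite{BS} since the modular curve and Hauptmoduln coincide), and then proving the valuation bounds on the resulting matrix entries — likely via a combination of Lemma~\ref{lem:oov_bound}-type order estimates at each cusp and a direct $5$-adic analysis of the $U_5$-operator acting on $q$-expansions. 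Once that matrix is in hand with its valuation data, the induction closes essentially formally, and Theorem~\ref{thm:main} follows from Theorem~\ref{thm:main2} by reading off Lemma~\ref{lem:L_seq}, dividing by the unit part, and noting $(1+5x)^{\psi}$ is a unit in $\ZZ_{(5)}[[q]]$ so that $5^{\lceil \alpha/2 \rceil} \mid L_{2\alpha}$ forces $5^{\alpha} \mid a_3(5^{2\alpha}n + \gamma_\alpha)$.
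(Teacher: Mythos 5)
Your overall architecture (induction on $\alpha$, localized polynomial spaces $(1+5x)^{-n}\ZZ[x]$, explicit $U^{(i)}$-images of the basis elements $x^m/(1+5x)^n$ with controlled $5$-adic valuations, and bookkeeping of $\psi$) matches the paper's strategy. But your step (3) contains the claim on which the whole induction hinges, and as stated it is false: it is \emph{not} true that every entry of the $U^{(1)}$-transition matrix lies in $5\ZZ[x]$ (up to the allowed denominator). In the paper's notation, the needed entrywise inequality $\theta_1(m)+\pi_1(m,r)\ge\theta_0(r)+1$ fails for $r=1$ with $1\le m\le 4$ and for $r=2$ with $4\le m\le 8$, where the deficit is exactly one power of $5$. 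So $\tfrac15 U^{(1)}$ does not map the full space of $5$-adically admissible localized polynomials into the target space, and your induction does not close.

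The missing idea is that the extra power of $5$ is recovered only \emph{in aggregate}, for inputs whose numerator coefficients $s(m)$ satisfy two specific linear congruences modulo $5$, namely $s(1)+s(2)+s(3)+2s(4)+s(5)\equiv 0$ and $4s(4)+s(6)+s(7)+s(8)\equiv 0 \pmod 5$ (the defining conditions of the subspace $\mathcal{V}_n^{(1)}$ in \eqref{eq:Vn1}). This in turn forces a second nontrivial obligation that your outline omits entirely: one must prove a \emph{stability} theorem (Theorem \ref{thm:stab}) showing that $\tfrac15 U^{(0)}\circ U^{(1)}$ maps $\mathcal{V}_n^{(1)}$ back into $\mathcal{V}_{25n+5}^{(1)}$, i.e.\ that the two linear congruence conditions are reproduced in the output so they are available at the next stage of the induction. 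Establishing this requires the periodicity $h_i(m,n,r)\equiv h_i(m,n-5,r)\pmod 5$ (Proposition \ref{prop:h_cong}), explicit computation of the composite coefficients $\hat t(1),\dots,\hat t(8)$, and a verification that the two relevant linear combinations of the $\hat t(w)$ lie in the ideal $5I$ with $I=\langle s(1)+s(2)+s(3)+2s(4)+s(5),\,4s(4)+s(6)+s(7)+s(8)\rangle$. One also checks by hand that $L_1$ itself satisfies these congruences, so that the induction can start. Without this subspace-plus-stability mechanism, the $5$-divisibility chase in your step (3) breaks at the very first application of $U^{(1)}$.
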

Since $1+5x \equiv 1 \pmod{5},$ the presence of $(1+5x)^\psi$ does not affect the 5-divisibility of $L_\alpha.$ Thus, coupling Theorem \ref{thm:main2} with Lemma \ref{lem:L_seq} provides an immediate proof of \ref{thm:main}, and so for the remainder of this paper, we focus our attention on proving Theorem \ref{thm:main2}. 

\section{Preliminary Results}\label{sec:prelim}
We begin by defining the spaces of rational polynomials in which the members of our sequence $(L_\alpha)_{\alpha \geq 1}$ lie. Set

\begin{align*}
\theta_0(m) &\coloneqq \begin{cases} 0, & 1 \leq m \leq 4, \\ \left\lfloor \frac{5m-1}{7} \right\rfloor -2 , & m \geq 5, \end{cases} \\
\theta_1(m) &\coloneqq \begin{cases} 0, & 1 \leq m \leq 7, \\ \left\lfloor \frac{5m-2}{7} \right\rfloor -5, & m \geq 8. \end{cases}
\end{align*}

\begin{defn} Let $s : \mathbb{Z}_{\geq 1} \rightarrow \mathbb{Z}$ be an arbitrary discrete function. Then for $n \in \mathbb{Z}$ we set 
\begin{align*}
\mathcal{V}_n^{(0)} &\coloneqq \left\{\frac{1}{(1+5x)^n} \sum\limits_{m \geq 1} s(m) \cdot 5^{\theta_0(m)} \cdot x^m \right\}, \\
\hat{\mathcal{V}}_n &\coloneqq \left\{\frac{1}{(1+5x)^n} \sum\limits_{m \geq 1} s(m) \cdot 5^{\theta_1(m)} \cdot x^m \right\}, 
\end{align*}
\begin{equation}\label{eq:Vn1}
\mathcal{V}_n^{(1)} \coloneqq \left\{f \in \hat{\mathcal{V}}_n : \begin{pmatrix} s(1) +  s(2) + s(3) + 2s(4) + s(5) \\ 4s(4) + s(6) + s(7) + s(8) \end{pmatrix} \equiv \begin{pmatrix} 0 \\ 0 \end{pmatrix} \pmod{5} \right\}.
\end{equation}
\end{defn}

Next, we recall useful modular equations over $\Gamma_0(N)$ that are satisfied by $x$ and $z.$ To state these identities, define 
\begin{align*}
&a_0(\tau) \coloneqq -(x+20x^2+150x^3+500x^4+625x^5), \\
&a_1(\tau) \coloneqq -(15x+305x^2+2325x^3+7875x^4+10000x^5,\\
&a_2(\tau) \coloneqq  -(85x+1750x^2+13525x^3+46500x^4+60000x^5),\\
&a_3(\tau) \coloneqq -(215x+4475x^2+35000x^3+122000x^4+160000x^5), \\
&a_4(\tau) \coloneqq -(205x+4300x^2+34000x^3+120000x^4+160000x^5),\\
\end{align*}
and 
\begin{align*}
&b_0(\tau) \coloneqq -z^5, \\
&b_1(\tau) \coloneqq 1+5z+5z^2+5z^3+5z^4-16z^5, \\
&b_2(\tau) \coloneqq -4-15z+10z^2+35z^3+60z^4-96z^5, \\
&b_3(\tau) \coloneqq 6+15z-35^2+40z^3+240z^4-256z^5, \\
&b_4(\tau) \coloneqq -4-5z+20z^2-80z^3+320z^4-256z^5, \\
&b_5(\tau) \coloneqq 1. \\
\end{align*}

\begin{lem}[Theorems 4.3 and 4.4 of \cite{BS}]\label{lem:mod_eq} We have the relations (correcting the coefficient of $x^3$ in $a_2(\tau)$ and the the leading term of the second equation)
\begin{equation}\label{eq:xmodeq}
x^5+\sum\limits_{j=0}^4 a_j(5\tau)x^j = 0
\end{equation}
and 
\begin{equation}\label{eq:zmodeq}
z^5+\sum\limits_{k=0}^4 b_k(5\tau) z^k = 0.
\end{equation}
\end{lem}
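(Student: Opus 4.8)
The plan is to prove each of the two identities as an equality in the function field of the compact Riemann surface $X_0(50)$, exploiting the fact that a modular function that is holomorphic on $\HH$ has only finitely many poles, all at cusps, together with the valence formula: a high enough order of vanishing at $\infty$ forces the function to vanish identically. The point of routing the argument through the valence formula, rather than through an explicit cancellation of principal parts, is that one only needs \emph{upper bounds} on the pole orders, after which everything reduces to a single finite $q$-expansion check.

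First I would fix the ambient group. Both $x(\tau)$ and $x(5\tau)$ are eta-quotients supported on divisors of $50$: indeed $x(5\tau)=\dfrac{\eta(10\tau)\eta(50\tau)^3}{\eta(5\tau)^3\eta(25\tau)}$, and a direct verification of the four Newman conditions of Lemma~\ref{lem:eta-quotient1}---in particular $\prod_\delta \delta^{r_\delta}=400=20^2$ is a rational square---shows $x(5\tau)\in\mathcal{M}(\Gamma_0(50))$, while $x(\tau)\in\mathcal{M}(\Gamma_0(10))\subseteq\mathcal{M}(\Gamma_0(50))$. Hence, writing $a_j(5\tau)$ for the indicated polynomial evaluated at $x(5\tau)$,
\[
F(\tau)\coloneqq x(\tau)^5+\sum_{j=0}^4 a_j(5\tau)\,x(\tau)^j\in\mathcal{M}(\Gamma_0(50)).
\]
Since $\eta$ is nonvanishing on $\HH$, each eta-quotient is holomorphic and nonvanishing on $\HH$, so $F$ is holomorphic on $\HH$ and all of its poles occur at the twelve cusps of $\Gamma_0(50)$.

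Next I would bound the pole divisor of $F$. Applying Ligozat's formula (Lemma~\ref{lem:eta-quotient2}) to the eta-quotients $x(\tau)$ and $x(5\tau)$ gives $\text{ord}_c^{50}(x(\tau))$ and $\text{ord}_c^{50}(x(5\tau))$ at every cusp $c$; taking into account that every $a_j$ has no constant term, the order of each term $a_j(5\tau)x(\tau)^j$ at $c$ is explicit, and the order of the sum is at least the minimum of the term orders, yielding lower bounds $\text{ord}_c^{50}(F)\ge -p_c$ with integers $p_c\ge 0$. Because the normalization $\text{ord}_{a/c}^{N}$ of the excerpt already measures order in the local width-$\tfrac{N}{\gcd(c^2,N)}$ uniformizer, these $p_c$ are precisely the pole orders of $F$ at the points of $X_0(50)$, and $B\coloneqq\sum_{c\neq\infty}p_c$ bounds the total number of poles. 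The valence formula then gives $\text{ord}_\infty(F)\le B$ for any nonzero $F$, so it suffices to expand $x(\tau)=q+\cdots$ and $x(5\tau)=q^5+\cdots$ and confirm that the Fourier coefficients of $F$ vanish through order $B+1$, forcing $F\equiv 0$ and hence \eqref{eq:xmodeq}. This finite $q$-series verification---and, crucially, securing a pole bound $B$ sharp enough to keep the computation tractable---is the main obstacle; it is also the step at which the corrected coefficient of $x^3$ in $a_2(\tau)$ is confirmed, since an incorrect coefficient would leave a nonzero low-order $q$-coefficient.

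Finally, for the companion identity \eqref{eq:zmodeq} I would either repeat the argument verbatim with $z(\tau)$ and $z(5\tau)=\dfrac{\eta(10\tau)^5\eta(25\tau)}{\eta(5\tau)^5\eta(50\tau)}$ in place of $x(\tau),x(5\tau)$---this eta-quotient again satisfies Newman's conditions, so $z(5\tau)\in\mathcal{M}(\Gamma_0(50))$, and its cusp orders follow from those of $x$ via the relation $z=1+5x$---or, more economically, deduce it from \eqref{eq:xmodeq}. Since $z=1+5x$ holds at every argument, substituting $x=(z-1)/5$ and $x(5\tau)=(z(5\tau)-1)/5$ into \eqref{eq:xmodeq} and clearing denominators produces a monic degree-$5$ relation for $z(\tau)$ with coefficients polynomial in $z(5\tau)$; matching it against \eqref{eq:zmodeq} reduces the second identity to an elementary, if lengthy, algebraic simplification that simultaneously fixes the corrected leading term. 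Either route completes the proof of Lemma~\ref{lem:mod_eq}.
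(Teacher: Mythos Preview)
The paper does not supply its own proof of Lemma~\ref{lem:mod_eq}; the result is simply quoted from Theorems~4.3 and~4.4 of \cite{BS}, so there is no in-paper argument to compare against directly. Your plan is nonetheless sound and follows the standard template for certifying modular equations: you correctly place the left-hand side of \eqref{eq:xmodeq} in $\mathcal{M}(\Gamma_0(50))$ (your Newman check for $x(5\tau)$ is accurate, with $\prod_\delta \delta^{r_\delta}=400$), bound its pole divisor at the cusps via Lemma~\ref{lem:eta-quotient2}, and then reduce to a finite $q$-expansion verification using the valence formula. This is precisely the genre of argument the present paper deploys in the proof of Theorem~\ref{thm:h's}, where identities over $X_0(10)$ are established by matching principal parts; your valence-formula phrasing and the principal-part phrasing are interchangeable in practice. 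Your proposed shortcut for \eqref{eq:zmodeq}---substituting $x=(z-1)/5$ and $x(5\tau)=(z(5\tau)-1)/5$ into \eqref{eq:xmodeq}, clearing the factor $5^5$, and expanding---is also valid: since $z=1+5x$ is an affine change of Hauptmodul, the resulting monic degree-$5$ relation in $z(\tau)$ with coefficients in $\ZZ[z(5\tau)]$ must coincide with the displayed one, reducing \eqref{eq:zmodeq} to a purely algebraic identity between two explicit polynomials. The only pieces your outline leaves implicit are the numerical pole bound $B$ and the $q$-series check to that depth, both routine here.
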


In light of the form of $L_1$ given above in \eqref{eq:L1} and the way in which subsequent functions $L_\alpha$ are constructed as defined by \eqref{eq:Ldef}, we aim to understand the functions $U^{(i)}(x^m/ (1+5x)^n),$ where $i \in \{0,1\}.$ 
Using the modular equations \eqref{eq:xmodeq} and \eqref{eq:zmodeq}, we are able to construct recurrence relations for these functions of interest.

\begin{lem}\label{lem:recurrence}
For $m, n \in \ZZ$ and $i \in \{0,1\},$ we have 
\[U^{(i)}\left(\frac{x^m}{ (1+5x)^n}\right) = - \frac{1}{(1+5x)^5} \sum\limits_{j=0}^4 \sum\limits_{k=1}^5 a_j(\tau)b_k(\tau) \cdot U^{(i)}\left(\frac{x^{m+j-5}}{(1+5x)^{n-k}}\right).\]
\begin{proof}
The proof follows, mutatis mutandis, as in the proof of Lemma 5.1 of \cite{BS}. 
\end{proof}
\end{lem}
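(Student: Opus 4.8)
The plan is to prove Lemma~\ref{lem:recurrence} by a direct algebraic manipulation built on the two modular equations of Lemma~\ref{lem:mod_eq}. The key observation is that both $U^{(0)}$ and $U^{(1)}$ are, up to a fixed premultiplication by $\Phi$, just the operator $U_5$, and $U_5$ is $\CC$-linear; moreover, by Lemma~\ref{lem:U_PS}, for any $q$-series $H$ we have $U_5\!\left(H(q^5)\,G(q)\right) = H(q)\,U_5(G(q))$. So the strategy is to take the expression $x^m/(1+5x)^n$, multiply it by a cleverly chosen factor that equals $1$ and is built out of $x(5\tau)$ and $z(5\tau)$ (the ``level $25$'' copies), and then pull those level-$25$ factors out through $U_5$.

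Concretely, I would first rewrite \eqref{eq:xmodeq} as
\[
x^5 = -\sum_{j=0}^4 a_j(5\tau)\,x^j,
\]
which lets me reduce the power $x^m$ by $5$ at the cost of introducing a factor $a_j(5\tau)$ and lowering the $x$-exponent into the range $m+j-5$. Similarly, \eqref{eq:zmodeq} together with $z = 1+5x$ gives a relation expressing $(1+5x)^5 = z^5 = -\sum_{k=0}^4 b_k(5\tau) z^k = -\sum_{k=0}^4 b_k(5\tau)(1+5x)^k$; rearranging (and using that $b_5(\tau)=1$), one obtains an identity of the form $\sum_{k=1}^5 b_k(5\tau)(1+5x)^{k-5} = -b_0(5\tau) = z(5\tau)^5$, or something equivalent after dividing through by $(1+5x)^5$. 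The point is that combining these two gives an identity
\[
\frac{x^m}{(1+5x)^n} \;=\; -\frac{1}{(1+5x)^5}\sum_{j=0}^4\sum_{k=1}^5 a_j(5\tau)\,b_k(5\tau)\,\frac{x^{m+j-5}}{(1+5x)^{n-k}},
\]
where the right-hand side now displays the arguments $a_j(5\tau)$ and $b_k(5\tau)$ explicitly as functions of $5\tau$, i.e. as series in $q^5$. Applying $U^{(i)}$ to both sides and invoking Lemma~\ref{lem:U_PS} to extract $a_j$ and $b_k$ (now as functions of $\tau$) from under the $U_5$, while leaving behind the overall $-\frac{1}{(1+5x)^5}$ factor which also lives at level $25$ inside the sum — wait, more carefully, one pulls out $a_j(\tau) b_k(\tau)$ and the factor $(1+5x(\tau))^{-5}$ sits outside since $1+5x$ at level $25$ becomes $1+5x$ at level $5$ — yields exactly the claimed recurrence. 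The bookkeeping to make sure every factor of the form $F(5\tau)$ is correctly identified and that nothing at level $\tau$ is mistakenly pulled out is the only subtlety.

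The main obstacle, such as it is, is purely one of careful algebra: verifying that the product identity above is an exact consequence of \eqref{eq:xmodeq} and \eqref{eq:zmodeq} — in particular getting the index ranges ($j$ from $0$ to $4$, $k$ from $1$ to $5$) and the signs right, and confirming that the single prefactor $-1/(1+5x)^5$ is correct rather than some more complicated rational function. Since this is structurally identical to Lemma~5.1 of \cite{BS}, where the same computation is carried out for the cubic-partition-free analogue over $X_0(10)$, the cleanest route is to note that the derivation there goes through verbatim once one substitutes the present $a_j$, $b_k$, and the relation $z = 1+5x$; hence the proof reduces to the single sentence ``The proof follows, mutatis mutandis, as in the proof of Lemma~5.1 of \cite{BS},'' with the understanding that the reader can reconstruct the manipulation from \eqref{eq:xmodeq}, \eqref{eq:zmodeq}, and Lemma~\ref{lem:U_PS} as sketched above.
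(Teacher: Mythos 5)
Your proposal is correct and follows essentially the same route as the paper, which simply defers to the proof of Lemma 5.1 of \cite{BS}: use \eqref{eq:xmodeq} to collapse $\sum_{j=0}^4 a_j(5\tau)x^{m+j-5}$ to $-x^m$ and \eqref{eq:zmodeq} (with $b_5=1$ and $z=1+5x$) to collapse $\sum_{k=1}^5 b_k(5\tau)(1+5x)^{k-n}$ to $-b_0(5\tau)(1+5x)^{-n}=z(5\tau)^5(1+5x)^{-n}$, then pull every factor in $q^5$ through $U_5$ via Lemma \ref{lem:U_PS}. The only correction needed is in your displayed intermediate identity, where the prefactor must be $-1/(1+5x(5\tau))^5=-1/z(5\tau)^5$ (a series in $q^5$, which cancels the $z(5\tau)^5$ produced by the $b_k$-sum) rather than $-1/(1+5x(\tau))^5$; as you yourself observe afterward, it only becomes $-1/(1+5x(\tau))^5$ after passing through $U_5$.
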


Our next goal is to use Lemma \ref{lem:recurrence} to prove explicit formulas for $U^{(i)}(x^m/ (1+5x)^n)$ as rational polynomials in $x$ whose numerators have coefficients with predicable 5-adic valuations, as seen in the following theorem. 

\begin{defn}
For $m,r,\ell \in \ZZ$, let
\begin{align*}
\pi_0(m,r) &\coloneqq \max\left(0, \left\lfloor \frac{5r-m+2}{7} \right\rfloor -5 \right), \\
\pi_1(m,r) &\coloneqq \left\lfloor \frac{5r-m}{7} \right\rfloor.
\end{align*}
\end{defn}

Recall that a function $h : \ZZ^n \rightarrow \ZZ$ is called a discrete array if $h(m_1, m_2, \dots, m_n)$ has finite support as a function of $m_n$ for fixed $m_1, m_2, \dots, m_{n-1}$.

\begin{thm}\label{thm:h's}
There are discrete arrays $h_0, h_1 : \mathbb{Z}^3 \rightarrow \mathbb{Z}$ such that 
\begin{align*} 
U^{(0)}\left(\frac{x^m}{(1+5x)^n}\right) &= \frac{1}{(1+5x)^{5n+5}} \sum\limits_{r \geq \lceil (m+4)/5 \rceil} h_0(m,n,r) \cdot 5^{\pi_0(m,r)} \cdot x^r, \\
U^{(1)}\left(\frac{x^m}{(1+5x)^n}\right) &= \frac{1}{(1+5x)^n} \sum\limits_{r \geq \lceil m/5 \rceil} h_1(m,n,r) \cdot 5^{\pi_1(m,r)} \cdot x^r,
\end{align*}
for all $m, n \geq 0.$ 
\begin{proof}
The identity for $i=1$ is proved as part of Theorem 5.3 of \cite{BS}, and for $i=0$ we proceed by induction on $m$ and $n$. First, we consider $0 \leq m,n \leq 4.$ However, we need not explicitly prove all 25 of these base cases, as they can be constructed algorithmically from only five such equations. Indeed, by the Binomial Theorem 
\begin{align*} 
U^{(0)}\left(\frac{x^m}{(1+5x)^n}\right) &= \frac{1}{5^m} U^{(0)}\left(\frac{(z-1)^m}{z^n}\right) \\
&= \frac{1}{5^m} \sum\limits_{r=0}^n (-1)^{m-r} {m \choose r} \cdot U^{(0)}(z^{r-n}).
\end{align*}
So, if we are able to compute $U^{(0)}(z^{n})$ for all integers $n$, then we can quickly compute our 25 initial relations. Taking the modular equation \eqref{eq:zmodeq} and multiplying both sides by $z^n/b_0(5\tau)$, we see
\[z^n = -\frac{1}{b_0(5\tau)} \sum\limits_{k=1}^5 b_k(5\tau) z^{n+k}.\]
Thus,
\begin{align}
U^{(0)}(z^n) &= - \sum\limits_{k=1}^5 U^{(0)} \left(\frac{b_k(5\tau)z^{n+k}}{b_0(5\tau)}\right) \nonumber \\
&= -\sum\limits_{k=1}^5 \frac{b_k(\tau)}{b_0(\tau)} U^{(0)}(z^{n+k}) \label{eq:z_recurr}
\end{align}
thanks to Lemma \ref{lem:U_PS}. Since $n+k > n$ for $1 \leq k \leq 5,$ we only need to consider $U^{(0)}(z^n)$ for positive powers $n$ by inductively applying \eqref{eq:z_recurr}. But for $n \geq 1$,
\begin{align*} 
U^{(0)}(z^n) &= U^{(0)}((1+5x)^n) = U^{(0)}\left(\sum\limits_{i=0}^n {n \choose i} (5x)^r\right) \\
&= \sum\limits_{r=0}^n {n \choose r} 5^r \cdot U^{(0)}(x^r).
\end{align*}
Finally, note that for any $n \geq 1$ we may express $U^{(0)}(x^n)$ in terms of $U^{(0)}(x^k)$ for $0 \leq k \leq 4$ by applying \eqref{eq:xmodeq} as necessary. In Appendix \ref{app}, we provide explicit expressions for these five functions as rational polynomials in $x$ with denominator $(1+5x)^5,$ from which the remaining base cases may be quickly settled as describe above. 

In order to prove these identities, consider the function $\Phi(\tau) x(\tau)^k x(5\tau)^{-29} z(5\tau)^5$. Using Lemma \ref{lem:eta-quotient2} we may calculate the orders of each constituent factor at the cusps of $\Gamma_0(50)$; these values are listed in Table \ref{tab:cusps}, from which we immediately conclude
\[\Phi(\tau) x(\tau)^k x(5\tau)^{-29} z(5\tau)^5 \in \mathcal{M}^\infty(\Gamma_0(50))\]
since $0 \leq k \leq 4.$ 
\begin{table}
\begin{center}
\begin{tabular}{|c||c|c|c|c||c|}
\hline
Cusp & $\Phi(\tau)$ & $x(\tau)$ & $x(5\tau)$ & $z(5\tau)$ & $\Phi(\tau) x(\tau)^k x(5\tau)^{-29} z(5\tau)^5$ \\ \hline
$[\infty]_{10}$ & 5 & 1 & 5 & 0 & $k-140$ \\ 
$[1/25]_{10}$ & 4 & 0 & 0 & 0 & 4 \\
$[1/10]_{10}$ & 0 & 1 & 0 & 1 & $k+5$ \\
$[1/5]_{10}$ & 0 & 0 & $-1$ & $-1$ & 4 \\
$[3/10]_{10}$ & 0 & 1 & 0 & 1 & $k+5$ \\
$[2/5]_{10}$ & 0 & 0 & $-1$ & $-1$ & 4 \\
$[1/2]_{10}$ & -5 & 0 & 0 & 1 & 0 \\
$[3/5]_{10}$ & 0 & 0 & $-1$ & $-1$ & 4 \\
$[7/10]_{10}$ & 0 & 1 & 0 & 1 & $k+5$ \\
$[4/5]_{10}$ & 0 & 0 & $-1$ & $-1$ & 4 \\
$[9/10]_{10}$ & 0 & 1 & 0 & 1 & $k+5$ \\
$[0]_{10}$ & -4 & -5 & $-1$ & $-1$ & $20-5k$ \\ \hline
\end{tabular}
\caption{\label{tab:cusps}Modular cusp analysis over $X_0(50)$}
\end{center}
\end{table}
As such, 
\[U_5\left(\Phi(\tau) x(\tau)^k x(5\tau)^{-29} z(5\tau)^5\right) \in \mathcal{M}^\infty(\Gamma_0(10)),\] 
thanks to Lemma \ref{lem:Umap2}. Equivalently, by Lemma \ref{lem:U_PS}
\begin{equation}\label{eq:modfx1}
\frac{z^5}{x^{29}} U^{(0)}(x^k) \in \mathcal{M}^\infty(\Gamma_0(10)).
\end{equation}
On the other hand, each of the identities in Appendix \ref{app} is of the form
\[(1+5x)^5 U^{(0)}(x^k) = p_{k}(x)\]
for some polynomial $p_k(x) \in \mathbb{Z}[x]$. Note that for each $0 \leq k \leq 4,$ 
\begin{equation}\label{eq:modfx2}
\frac{p_k(x)}{x^{29}} \in \mathbb{Z}[x^{-1}] \subseteq \mathcal{M}^\infty(\Gamma_0(10)).
\end{equation}
Computing the Fourier expansions of the modular functions in \eqref{eq:modfx1} and \eqref{eq:modfx2}, we see that they have the same principal part, hence their difference must be a modular function over $\Gamma_0(10)$ with no poles. The only such functions are constants, and since the Fourier expansions of these functions have the same constant term, their difference must be 0. This proves the five initial relations in Appendix \ref{app}, thereby settling the base cases $0 \leq m, n \leq 4$ of Theorem \ref{thm:h's}. 

The inductive step follows identically to that of Theorem 5.3 of \cite{BS}.
\end{proof}
\end{thm}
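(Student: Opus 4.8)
The plan is a double induction on the pair $(m,n)$ for the $i=0$ identity; the $i=1$ identity is already part of Theorem~5.3 of \cite{BS}. First I would reduce the base square $0\le m,n\le 4$ to a handful of seed functions. Since $z=1+5x$, we have $x^m/(1+5x)^n=5^{-m}(z-1)^m z^{-n}$, so the binomial theorem writes each $U^{(0)}(x^m/(1+5x)^n)$ in this range as a $\ZZ[1/5]$-combination of the functions $U^{(0)}(z^k)$ with $-4\le k\le 4$. Multiplying the modular equation \eqref{eq:zmodeq} by $z^k/b_0(5\tau)$ and using Lemma~\ref{lem:U_PS} rewrites $U^{(0)}(z^k)$ in terms of $U^{(0)}(z^{k+1}),\dots,U^{(0)}(z^{k+5})$, so iterating brings us to the case $k\ge 0$; expanding $z^k=(1+5x)^k$ and using \eqref{eq:xmodeq} to depress powers of $x$ then leaves only the five seed functions $U^{(0)}(x^j)$, $0\le j\le 4$. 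It thus suffices to produce these five explicitly, say as $U^{(0)}(x^j)=p_j(x)/(1+5x)^5$ with $p_j\in\ZZ[x]$ of degree at most $29$, and then to verify the asserted shape --- denominator $(1+5x)^{5n+5}$ and divisibility of the $x^r$-coefficient by $5^{\pi_0(m,r)}$ --- for each of the $25$ resulting base cases.

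To pin down the seeds I would use a modular-function argument linking $X_0(50)$ to $X_0(10)$. By Lemma~\ref{lem:eta-quotient2} the eta-quotient $\Phi(\tau)\,x(\tau)^j x(5\tau)^{-29}z(5\tau)^5$ has non-negative order at every cusp of $\Gamma_0(50)$ other than the one at $\infty$ when $0\le j\le 4$, so it lies in $\mathcal{M}^\infty(\Gamma_0(50))$, and hence by Lemma~\ref{lem:Umap2} its image under $U_5$ lies in $\mathcal{M}^\infty(\Gamma_0(10))$; by Lemma~\ref{lem:U_PS} that image equals $z^5 x^{-29}\,U^{(0)}(x^j)$. Since $p_j(x)x^{-29}\in\ZZ[x^{-1}]\subseteq\mathcal{M}^\infty(\Gamma_0(10))$ as well (using $\deg p_j\le 29$ and the divisor of $x$), the difference $z^5 x^{-29}U^{(0)}(x^j)-p_j(x)x^{-29}$ is holomorphic at every cusp of $X_0(10)$, hence constant; comparing enough Fourier coefficients at $\infty$ --- all computable, since $\Phi$ and $x$ are explicit $q$-series and $U_5$ acts termwise --- to match the principal part and the constant term forces it to vanish, which identifies the $p_j$ and settles the base cases.

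For the inductive step I would first pass to $0\le m\le 4$ with all $n\ge 0$ (induction on $n$), then to all $m\ge 5$ with all $n\ge 0$ (induction on $m$), in each case substituting the already-known instances into the recurrence of Lemma~\ref{lem:recurrence},
\[
U^{(0)}\!\left(\frac{x^m}{(1+5x)^n}\right)=-\frac{1}{(1+5x)^5}\sum_{j=0}^{4}\sum_{k=1}^{5}a_j(\tau)b_k(\tau)\,U^{(0)}\!\left(\frac{x^{m+j-5}}{(1+5x)^{n-k}}\right),
\]
whose right-hand terms all carry strictly smaller $m$ (or, at the boundary, strictly smaller $n$, with the shifted indices handled as in \cite{BS}). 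By induction each inner term has denominator $(1+5x)^{5(n-k)+5}$, so after the external factor $(1+5x)^{-5}$ the common denominator is exactly $(1+5x)^{5n+5}$ (largest at $k=1$); comparing coefficients of $x^r$ then defines $h_0(m,n,r)$, and finiteness of its support in $r$ propagates automatically. The real work is the $5$-adic bookkeeping: multiplication by $a_j(\tau)b_k(\tau)$ shifts the exponent of $x$ by a bounded amount and scales coefficients by integers of known $5$-adic valuation, and one must check that for every output index $r$ the incoming guarantee $5^{\pi_0(m+j-5,\,\cdot)}$ (or $5^{\pi_1}$, coming through the $z$-side) together with those valuations is at least $5^{\pi_0(m,r)}$ --- a fixed finite collection of inequalities among the floor functions defining $\pi_0$ and $\pi_1$.

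The hard part will be exactly that last estimate. The offset $(5r-m+2)/7$ and the truncations by $-5$ and by $\max(0,\cdot)$ in the definition of $\pi_0$ are tuned precisely so that the inequality survives both nested summations, and the tightest terms --- those involving $a_0(\tau)$, whose lowest-degree coefficient is a unit modulo $5$, paired with each $b_k(\tau)$ --- leave almost no slack. Verifying that these constants line up so that the induction closes is the crux; everything else is either a direct computation or routine bookkeeping, and the details run essentially as in the proof of Theorem~5.3 of \cite{BS}.
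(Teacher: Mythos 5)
Your proposal follows essentially the same route as the paper: the $i=1$ case is quoted from Theorem 5.3 of \cite{BS}; the base square $0\le m,n\le 4$ is reduced via $x^m/(1+5x)^n=5^{-m}(z-1)^mz^{-n}$ and the two modular equations to the five seeds $U^{(0)}(x^j)$, $0\le j\le 4$, which are then pinned down by exactly the same $\mathcal{M}^\infty(\Gamma_0(50))\to\mathcal{M}^\infty(\Gamma_0(10))$ argument with $\Phi(\tau)x(\tau)^jx(5\tau)^{-29}z(5\tau)^5$; and the inductive step runs through the recurrence of Lemma \ref{lem:recurrence} with the same $5$-adic bookkeeping deferred to \cite{BS}. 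The plan is correct and matches the paper's proof, with your sketch of the inductive step being somewhat more explicit than the paper's one-line deferral.
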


The discrete arrays $h_i(m,n,r)$ appearing in Theorem \ref{thm:h's} satisfy internal congruences in $n$ modulo 5, which play a pivotal role in the proof of Theorem \ref{thm:main2}. 

\begin{prop}\label{prop:h_cong}
For all $m,n,r \geq 1$ and $i \in \{0,1\},$ we have
\[h_i(m,n,r) \equiv h_i(m,n-5,r) \pmod{5}.\]
\begin{proof}
The proof is identical to that of Theorem 5.5 of \cite{BS}.
\end{proof}
\end{prop}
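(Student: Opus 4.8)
The plan is to follow the proof of Theorem~5.5 of \cite{BS} essentially verbatim; in fact the case $i=1$ is already contained there, so the real task is to check that the argument transfers to $U^{(0)}$, with $\theta_0,\pi_0$ in place of $\theta_1,\pi_1$. The idea is first to promote the functional recurrence of Lemma~\ref{lem:recurrence} into a recurrence among the integer arrays $h_i(m,n,r)$ themselves, and then to compare that recurrence with its translate by $5$ in the variable $n$, running the same induction on $m$ that proves Theorem~\ref{thm:h's}.

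Concretely, I would substitute the explicit formulas of Theorem~\ref{thm:h's} into both sides of Lemma~\ref{lem:recurrence}. For $i=0$, after multiplying through by the common denominator $(1+5x)^{5n+5}$, one obtains an identity of power series
\begin{align*}
\sum_{r} h_0(m,n,r)\,5^{\pi_0(m,r)}\,x^r
&= -\sum_{j=0}^{4}\sum_{k=1}^{5}(1+5x)^{5k-5}\,a_j(\tau)\,b_k(\tau)\\
&\qquad\times\sum_{r'} h_0(m+j-5,\,n-k,\,r')\,5^{\pi_0(m+j-5,\,r')}\,x^{r'},
\end{align*}
in which every first index occurring on the right is $m+j-5\le m-1$. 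Subtracting the same identity with $n$ replaced by $n-5$ yields the corresponding recurrence for the difference array $D_0(m,n,r)\coloneqq h_0(m,n,r)-h_0(m,n-5,r)$: it is the same sum, but with each $h_0(m+j-5,n-k,r')$ replaced by $D_0(m+j-5,n-k,r')$. Since the first indices appearing there are strictly smaller than $m$, an induction on $m$ closes. The base cases $0\le m\le 4$ I would treat directly, exactly as in the proof of Theorem~\ref{thm:h's}: writing $x^m/(1+5x)^n=5^{-m}\sum_{t=0}^m\binom{m}{t}(-1)^{m-t}z^{t-n}$ and reducing every $U^{(0)}(z^N)$ to the finitely many values $U^{(0)}(z^0),\dots,U^{(0)}(z^4)$ by means of \eqref{eq:z_recurr}, the needed congruences become a finite verification (and for $i=1$ one uses the analogous base reduction already carried out in \cite{BS}).

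For the inductive step, I would compare the coefficient of $x^r$ on the two sides of the $D_0$-recurrence. By the inductive hypothesis, each $D_0(m+j-5,n-k,r')$ appearing on the right is divisible by $5$; together with the factors $5^{\pi_0(m+j-5,r')}$ and the $5$-adic valuation $v$ of the relevant coefficient of $(1+5x)^{5k-5}a_j(\tau)b_k(\tau)$, this shows that every term contributing to $[x^r]$ of the right-hand side is divisible by $5^{\,1+v+\pi_0(m+j-5,r')}$. One then invokes the very inequality that drives the proof of Theorem~\ref{thm:h's}, namely $v+\pi_0(m+j-5,r')\ge\pi_0(m,r)$ whenever that coefficient is nonzero and pairs with $x^{r'}$ to produce $x^r$, to conclude that $[x^r]$ of the left-hand side---which equals $D_0(m,n,r)\,5^{\pi_0(m,r)}$---is divisible by $5^{\pi_0(m,r)+1}$. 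Hence $D_0(m,n,r)\equiv 0\pmod 5$, which is the assertion. The case $i=1$ is identical after replacing $(1+5x)^{5n+5}$ by $(1+5x)^n$ and $\pi_0$ by $\pi_1$ throughout; the only cosmetic difference is an auxiliary factor $(1+5x)^4$ that appears when clearing denominators, and since it is a unit modulo $5$ it does not affect the $5$-adic estimates.

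The main obstacle is the $5$-adic bookkeeping in the inductive step: one must be sure that the single extra power of $5$ supplied by the inductive hypothesis is never absorbed by a drop from $\pi_0(m,r)$ to $\pi_0(m+j-5,r')$. This is controlled by exactly the same combinatorics relating the $\theta$- and $\pi$-exponents to the valuations of $a_j$ and $b_k$ (recall that $a_1,\dots,a_4$, $b_1,\dots,b_4$, and the non-leading coefficients of $a_0$, are all divisible by $5$) that \cite{BS} establish in proving Theorem~\ref{thm:h's}; no new estimates are needed, and the verification is word for word that of \cite[Theorem~5.5]{BS}.
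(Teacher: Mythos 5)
Your proposal is correct and follows exactly the route the paper takes: the paper's proof is simply the one-line citation ``identical to Theorem 5.5 of \cite{BS},'' and your argument --- promoting Lemma \ref{lem:recurrence} to a recurrence on the arrays $h_i$, passing to the difference $h_i(m,n,r)-h_i(m,n-5,r)$, and inducting on $m$ with the base cases and $5$-adic inequalities already established in the proof of Theorem \ref{thm:h's} --- is precisely that proof written out for both $i=0$ and $i=1$. The only point to keep an eye on is that the base cases $0\le m\le 4$ still require their own verification uniformly in $n$ (via the $z$-recurrence \eqref{eq:z_recurr}), but this is handled exactly as in \cite{BS}.
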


By explicitly computing $h_1(m,n,1)$ and $h_1(m,n,2)$ for $1 \leq m \leq 8$ and $0 \leq n \leq 4$, we immediately obtain the following congruences. 

\begin{cor}\label{cor:h_cor}
For all $n \in \ZZ$, 
\begin{align*}
&\begin{pmatrix} h_1(1,n,1) & h_1(2,n,1) & h_1(3,n,1) & h_1(4,n,1) & h_1(5,n,1) \\ h_1(4,n,2) & h_1(5,n,2) & h_1(6,n,2) & h_1(7,n,2) & h_1(8,n,2) \end{pmatrix} \\
&\equiv \begin{pmatrix} 1 & 1 & 1 & 2 & 1 \\ 4 & 0 & 1 & 1 & 1 \end{pmatrix} \pmod{5}.
\end{align*}
\end{cor}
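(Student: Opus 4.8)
The plan is to reduce Corollary~\ref{cor:h_cor} to a finite, explicitly checkable computation and then invoke Proposition~\ref{prop:h_cong} to cover all integers $n$. First I would observe that Proposition~\ref{prop:h_cong} says $h_1(m,n,r) \equiv h_1(m,n-5,r) \pmod 5$, so the residue of $h_1(m,n,r)$ modulo $5$ depends only on $n \bmod 5$. Hence it suffices to verify the claimed congruence for the five residues $n \in \{0,1,2,3,4\}$, and for those values the statement is a finite assertion about the integers $h_1(m,n,1)$ and $h_1(m,n,2)$ for $1 \le m \le 8$.

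Next I would extract these specific values of the discrete array $h_1$ from Theorem~\ref{thm:h's}. By definition, $h_1(m,n,r)$ is recovered as the coefficient of $x^r$ (after removing the power of $5$ given by $\pi_1(m,r) = \lfloor (5r-m)/7 \rfloor$) in the rational-polynomial expansion of $U^{(1)}(x^m/(1+5x)^n)$, whose denominator is exactly $(1+5x)^n$. So for each of the $40$ pairs $(m,n)$ with $1 \le m \le 8$, $0 \le n \le 4$, I would compute $U^{(1)}(x^m/(1+5x)^n) = U_5(x^m/(1+5x)^n)$ directly: expand $x^m/(1+5x)^n$ as a $q$-series using the product formula for $x = q f_2 f_{10}^3/(f_1^3 f_5)$, apply the $U_5$ operator coefficientwise as in \eqref{eq:U}, and then re-expand the result in powers of $x$ over the known denominator. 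Reading off the $x^1$ and $x^2$ coefficients, dividing by $5^{\pi_1(m,r)}$ (which is $5^0 = 1$ for $r=1$ and all $m \le 8$, and also $1$ for $r = 2$, $m \le 8$ since $\lfloor (10-m)/7 \rfloor \le 1$ but one must check the small cases where it could be $1$), and reducing mod $5$, I obtain the two rows of the displayed matrix. For $n$ in $\{0,1,2,3,4\}$ these should all collapse to the single matrix $\begin{pmatrix} 1 & 1 & 1 & 2 & 1 \\ 4 & 0 & 1 & 1 & 1 \end{pmatrix}$ modulo $5$ — indeed, since $(1+5x)^{-n} \equiv 1 \pmod 5$ as a power series, the $n$-dependence of $h_1(m,n,r) \bmod 5$ should wash out entirely for these $r$, which is why the right-hand side is independent of $n$.

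Finally, I would combine the two ingredients: Proposition~\ref{prop:h_cong} propagates the congruence from $n \in \{0,1,2,3,4\}$ to all $n \in \ZZ$ (both positive and, by running the recursion backward, negative), completing the proof.

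The main obstacle here is not conceptual but bookkeeping: one must be careful that the normalizing exponents $\pi_1(m,r)$ are correctly accounted for when reading off $h_1$, and that the re-expansion of the $U_5$-image as a rational polynomial in $x$ with the precise denominator $(1+5x)^n$ is carried out consistently with the normalization in Theorem~\ref{thm:h's}. In particular one should double-check the boundary entries $h_1(4,n,2)$ and $h_1(5,n,2)$ (the $4$ and the $0$ in the second row), since a misplaced factor of $5$ there is exactly the kind of error that would be invisible in the leading terms but fatal downstream; verifying these against the explicit base-case identities in Appendix~\ref{app} is the natural safeguard.
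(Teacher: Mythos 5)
Your proposal is correct and follows essentially the same route as the paper: reduce to the residues $n \in \{0,1,2,3,4\}$ via Proposition~\ref{prop:h_cong} and then verify the forty values $h_1(m,n,r)$ by explicit finite computation. One tiny caveat: the base-case identities in Appendix~\ref{app} are for $U^{(0)}$, not $U^{(1)}$, so the cross-check you suggest would instead have to be made against the $U^{(1)}$ base cases from Theorem 5.3 of \cite{BS}.
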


\section{Proof of Theorem \ref{thm:main2}}\label{sec:proof}
We first consider the application of the $U^{(0)}$-operator to functions in $\mathcal{V}_{n}^{(0)}.$

\begin{thm}\label{thm:U0_Vhat}
Suppose $f \in \mathcal{V}_{n}^{(0)}.$ Then $U^{(0)}(f) \in \hat{\mathcal{V}}_{5n+5}.$
\begin{proof}
Since $f \in \mathcal{V}_n^{(0)}$, we can write
\[f = \frac{1}{(1+5x)^n} \sum\limits_{m \geq 1} s(m) \cdot 5^{\theta_0(m)} \cdot x^m.\]
From Theorem \ref{thm:h's},
\begin{align*}
U^{(0)}(f) &= \sum\limits_{m \geq 1} s(m) \cdot 5^{\theta_0(m)} \cdot U^{(0)} \left( \frac{x^m}{(1+5x)^n}\right) \\
&= \frac{1}{(1+5x)^{5n+5}} \sum\limits_{m \geq 1} \sum\limits_{r \geq \lceil (m+4)/5 \rceil} s(m) \cdot h_0(m,n,r) \cdot 5^{\theta_0(m) + \pi_0(m,r)} \cdot x^r \\
&= \frac{1}{(1+5x)^{5n+5}} \sum\limits_{r \geq 1} \sum\limits_{m \geq 1}  s(m) \cdot h_0(m,n,r) \cdot 5^{\theta_0(m) + \pi_0(m,r)} \cdot x^r.
\end{align*}
Therefore, it suffices to show that, for all $m\leq 5r-4,$
\[\theta_0(m) + \pi_0(m,r) \geq \theta_1(r).\]
This inequality is proved in Theorem 6.1 of \cite{BS}. 
\end{proof}
\end{thm}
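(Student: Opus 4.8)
The plan is to reduce the statement to a single elementary inequality between the $5$-adic exponents that arise when $U^{(0)}$ is applied term by term. First I would write a generic element $f \in \mathcal{V}_n^{(0)}$ in its defining shape
\[f = \frac{1}{(1+5x)^n}\sum_{m\ge 1} s(m)\cdot 5^{\theta_0(m)}\cdot x^m,\]
and use the $\CC$-linearity of $U^{(0)}$ together with the explicit formula for $U^{(0)}\!\left(x^m/(1+5x)^n\right)$ from Theorem \ref{thm:h's}. This yields a double sum with common denominator $(1+5x)^{5n+5}$; interchanging the order of summation is legitimate because $h_0$ is a discrete array, so for each fixed $r$ only finitely many $m$ contribute while for each $m$ the inner sum starts at $r=\lceil (m+4)/5\rceil$. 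Collecting the coefficient of $x^r$ gives
\[\sum_{m\ge 1} s(m)\cdot h_0(m,n,r)\cdot 5^{\theta_0(m)+\pi_0(m,r)},\]
an integer multiple of $5^{\,\min_m(\theta_0(m)+\pi_0(m,r))}$ since each $h_0(m,n,r)\in\ZZ$.

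Next I would note that membership in $\hat{\mathcal{V}}_{5n+5}$ requires exactly that this coefficient be $5^{\theta_1(r)}$ times an integer for every $r\ge 1$, and that the lower limit of the inner sum forces $m\le 5r-4$ whenever $h_0(m,n,r)\neq 0$. Hence it suffices to prove
\[\theta_0(m)+\pi_0(m,r)\ \ge\ \theta_1(r)\qquad\text{for all } r\ge 1 \text{ and } 1\le m\le 5r-4.\]
(One should also record in passing that $\lceil (m+4)/5\rceil\ge 1$ for $m\ge 1$, so the resulting series is supported on exponents $\ge 1$, as the definition of $\hat{\mathcal{V}}$ demands.)

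The main obstacle is this last inequality, which is a purely arithmetic statement about nested floor functions with denominator $7$ together with the truncation $\pi_0(m,r)=\max\!\big(0,\lfloor (5r-m+2)/7\rfloor-5\big)$. I would handle it by splitting into the regimes in which $\theta_0(m)$, $\theta_1(r)$, and $\pi_0(m,r)$ vanish or not --- that is, $m\le 4$ versus $m\ge 5$, $r\le 7$ versus $r\ge 8$, and $5r-m+2\le 35$ versus $>35$ --- and in each regime bounding the floors against one another, using $m\le 5r-4$ to control $5r-m$ from below. Since this is precisely the content of Theorem 6.1 of \cite{BS}, I would invoke that result rather than reproduce the casework, which completes the proof.
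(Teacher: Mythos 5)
Your proposal is correct and follows essentially the same route as the paper: expand $f$ in its defining form, apply Theorem \ref{thm:h's} term by term, interchange the summations, and reduce everything to the inequality $\theta_0(m)+\pi_0(m,r)\geq\theta_1(r)$ for $m\leq 5r-4$, which is then cited from Theorem 6.1 of \cite{BS}. The only additions are minor bookkeeping (justifying the interchange of sums, noting the support starts at $r\geq 1$), which the paper leaves implicit.
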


Next, we show that $U^{(1)}$ maps functions in $\mathcal{V}_n^{(1)}$ to $\mathcal{V}_{n'}^{(0)}$ while picking up an extra power of 5. 

\begin{thm}\label{thm:U1_V}
Suppose $f \in \mathcal{V}_n^{(1)}.$ Then
\[\frac{1}{5} U^{(1)}(f) \in \mathcal{V}_{5n}^{(0)}.\]
\begin{proof}
This is Theorem 6.2 of \cite{BS} with the assumption $n \equiv 1 \pmod{5}$ removed; we summarize the proof here for completeness. 

Since $f \in \mathcal{V}_n^{(1)},$ we can write 
\[f = \frac{1}{(1+5x)^n} \sum\limits_{m \geq 1} s(m) \cdot 5^{\theta_1(m)} \cdot x^m.\]
From Theorem \ref{thm:h's},
\begin{align*}
U^{(1)}(f) &= \sum\limits_{m \geq 1} s(m) \cdot 5^{\theta_1(m)} \cdot U^{(1)} \left( \frac{x^m}{(1+5x)^n}\right) \\
&= \frac{1}{(1+5x)^{5n}} \sum\limits_{m \geq 1} \sum\limits_{r \geq \lceil m/5 \rceil} s(m) \cdot h_1(m,n,r) \cdot 5^{\theta_1(m) + \pi_1(m,r)} \cdot x^r \\
&= \frac{1}{(1+5x)^{5n+5}} \sum\limits_{r \geq 1} \sum\limits_{m \geq 1}  s(m) \cdot h_1(m,n,r) \cdot 5^{\theta_1(m) + \pi_1(m,r)} \cdot x^r.
\end{align*}
For $r \geq 3$, Banerjee and Smoot prove
\begin{equation}\label{eq:bound}
\theta_1(m)+\pi_1(m,r) \geq \theta_0(r)+1.
\end{equation}
However, \eqref{eq:bound} is not true for $r=1$ with $1 \leq m \leq 4$ and for $r=2$ with $4 \leq m \leq 8$, as in these cases 
\[\theta_1(m)+\pi_1(m,r) - \theta_0(r) -1 = -1 < 0.\]
For these problematic values of $m$ and $r$, we see
\begin{align*} 
\sum\limits_{m=1}^5 s(m) \cdot h_1(m,n,1) &\equiv s(1) + s(2) + s(3) + 2s(4) + s(5) \pmod{5}, \\
\sum\limits_{m=4}^8 s(m) \cdot h_1(m,n,2) &\equiv 4s(4) + s(6) + s(7) + s(8) \pmod{5},
\end{align*}
thanks to Corollary \ref{cor:h_cor}. But $f \in \mathcal{V}_n^{(1)}$ implies that both of these sums vanish modulo $5$. Therefore, for $r \in \{1,2\}$ the missing power of 5 is accounted for in the coefficients $s(m) \cdot h_1(m,n,r),$ completing the proof.
\end{proof}
\end{thm}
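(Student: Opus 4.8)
The plan is to follow the strategy signposted by the statement, reducing the whole claim to a coefficientwise comparison of $5$-adic valuations. Since $f \in \mathcal{V}_n^{(1)} \subseteq \hat{\mathcal{V}}_n$, I would first write $f = (1+5x)^{-n}\sum_{m \geq 1} s(m)\, 5^{\theta_1(m)} x^m$ and apply $U^{(1)}$ using its $\CC$-linearity together with the explicit formula of Theorem \ref{thm:h's}. After interchanging the order of summation this yields
\[U^{(1)}(f) = \frac{1}{(1+5x)^{5n}} \sum_{r \geq 1} c(r)\, x^r, \qquad c(r) = \sum_{m \geq 1} s(m)\, h_1(m,n,r)\, 5^{\theta_1(m)+\pi_1(m,r)}.\]
To land in $\mathcal{V}_{5n}^{(0)}$ after dividing by $5$, it suffices to establish the bound $v_5\big(c(r)\big) \geq \theta_0(r)+1$ for every $r \geq 1$; then each $c(r)/5$ equals $5^{\theta_0(r)}$ times an integer, which is exactly the shape demanded by the definition of $\mathcal{V}_{5n}^{(0)}$. (The support condition $r \geq \lceil m/5 \rceil$ in Theorem \ref{thm:h's} guarantees the sum starts at $r=1$, matching that target space.)

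The bulk of the argument is then purely arithmetic. For the generic range I would invoke the exponent inequality \eqref{eq:bound}, namely $\theta_1(m)+\pi_1(m,r) \geq \theta_0(r)+1$, which holds for all $r \geq 3$: in that regime every single summand of $c(r)$ already carries a factor $5^{\theta_0(r)+1}$, so the valuation bound is immediate and nothing further is needed. The crux is the finitely many boundary cases $r \in \{1,2\}$, where \eqref{eq:bound} fails by exactly one power of $5$ for a short segment of indices $m$ — namely $1 \leq m \leq 5$ when $r=1$ and $4 \leq m \leq 8$ when $r=2$, as forced by the support bound $r \geq \lceil m/5 \rceil$ together with the floor functions defining $\theta_1$ and $\pi_1$. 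Here the individual terms are only divisible by $5^{\theta_0(r)}$, so I must show that the deficient contributions cancel modulo an extra factor of $5$.

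For these boundary values the plan is to factor $5^{\theta_0(r)}$ out of the offending terms and examine the residual sum $\sum_m s(m)\, h_1(m,n,r) \pmod 5$. Substituting the explicit residues recorded in Corollary \ref{cor:h_cor}, the two sums collapse to precisely $s(1)+s(2)+s(3)+2s(4)+s(5)$ and $4s(4)+s(6)+s(7)+s(8)$ — which are exactly the two linear forms that the definition \eqref{eq:Vn1} of $\mathcal{V}_n^{(1)}$ requires to vanish modulo $5$. Hence the deficient parts of $c(1)$ and $c(2)$ acquire the missing power of $5$, upgrading the bound to $v_5\big(c(r)\big) \geq \theta_0(r)+1$ in these cases as well and completing the verification.

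I expect the genuine obstacle to be this boundary analysis at $r \in \{1,2\}$, and in particular the reliance on Corollary \ref{cor:h_cor}: the residues $h_1(m,n,r) \bmod 5$ must be \emph{independent of $n$}, which is what allows the finitely many computed values to control all $n$ simultaneously. That independence is supplied by the internal congruence $h_1(m,n,r) \equiv h_1(m,n-5,r) \pmod 5$ of Proposition \ref{prop:h_cong}, reducing the check to the base cases $0 \leq n \leq 4$. The delicate point is the bookkeeping: one must match the deficient index set for each $r$ exactly against the linear forms built into $\mathcal{V}_n^{(1)}$, so that no deficient term is overlooked (note that $m=5$ contributes $0$ for $r=2$, since $h_1(5,n,2) \equiv 0$). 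Because those forms were engineered precisely to annihilate these obstructions, the argument goes through, and in particular the hypothesis $n \equiv 1 \pmod 5$ imposed in the original Theorem 6.2 of \cite{BS} plays no role and is dropped.
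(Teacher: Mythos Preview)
Your proposal is correct and follows essentially the same route as the paper's proof: expand $f$, apply Theorem~\ref{thm:h's}, invoke the exponent inequality $\theta_1(m)+\pi_1(m,r)\ge\theta_0(r)+1$ for $r\ge 3$, and handle the deficient boundary cases $r\in\{1,2\}$ via Corollary~\ref{cor:h_cor} together with the defining congruences of $\mathcal{V}_n^{(1)}$. Your write-up is in fact slightly more careful than the paper in two places: you correctly list the deficient range for $r=1$ as $1\le m\le 5$ (the paper states $1\le m\le 4$ but then sums to $m=5$), and you keep the denominator as $(1+5x)^{5n}$ rather than the paper's evident typo $(1+5x)^{5n+5}$.
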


So far, from Theorems \ref{thm:U0_Vhat} and \ref{thm:U1_V}, we know that 
\[\frac{1}{5} U^{(0)} \circ U^{(1)} : \mathcal{V}_n^{(1)} \rightarrow \hat{\mathcal{V}}_{25n+5}.\]
In order to complete the proof of Theorem \ref{thm:main2}, we need the image of this operator to lie in $\mathcal{V}_{25n+5}^{(0)}$ rather than just $\hat{\mathcal{V}}_{25n+5}.$ We establish this stability of $\frac{1}{5} U^{(0)} \circ U^{(1)}$ in the following theorem.

\begin{thm}\label{thm:stab}
Suppose $n \equiv 0 \pmod{5}$ and $f \in \mathcal{V}_n^{(1)}.$ Then 
\[\frac{1}{5} U^{(0)} \circ U^{(1)} (f) \in \mathcal{V}_{25n+5}^{(1)}.\]
\begin{proof}
Let
\[f = \frac{1}{(1+5x)^n} \sum\limits_{m \geq 1} s(m) \cdot 5^{\theta_1(m)} \cdot x^m,\]
so that by Theorem \ref{thm:U1_V}
\begin{align*} 
U^{(1)}(f) &= \sum\limits_{m \geq 1} s(m) \cdot 5^{\theta_1(m)} \cdot U^{(1)}\left(\frac{x^m}{(1+5x)^n}\right) \\
&= \frac{1}{(1+5x)^{5n}} \sum\limits_{m \geq 1}    \sum\limits_{r \geq \lceil m/5 \rceil} s(m) \cdot 5^{\theta_1(m)} \cdot h_1(m,n,r) \cdot 5^{\pi_1(m,r)} \cdot x^r \\
&=  \frac{1}{(1+5x)^{5n}} \sum\limits_{r \geq 1}    \sum\limits_{m \geq 1} s(m) \cdot 5^{\theta_1(m)+\pi_1(m,r)} \cdot h_1(m,n,r) \cdot x^r.
\end{align*}
Now from Theorem \ref{thm:U0_Vhat},
\begin{align*}
\frac{1}{5} U^{(0)} \circ U^{(1)} (f) &= \sum\limits_{r \geq 1} \sum\limits_{m \geq 1} s(m) \cdot 5^{\theta_1(m)+\pi_1(m,r)-1} \cdot h_1(m,n,r) \cdot U^{(0)} \left(\frac{x^r}{(1+5x)^{5n}}\right) \\ 
&=\sum\limits_{r \geq 1} \sum\limits_{m \geq 1} s(m) \cdot 5^{\theta_1(m)+\pi_1(m,r)-1} \cdot h_1(m,n,r) \cdot \frac{1}{(1+5x)^{25n+5}} \\
&\times \sum\limits_{w \geq \lceil (r+4)/5 \rceil} h_0(r,5n,w) \cdot 5^{\pi_0(r,w)} \cdot x^w \\
&= \frac{1}{(1+5x)^{25n+5}} \sum\limits_{w \geq 1} \sum\limits_{r \geq 1} \sum\limits_{m \geq 1} s(m) \cdot h_1(m,n,r) \cdot h_0(r,5n,w) \\
&\times 5^{\theta_1(m) + \pi_1(m,r) + \pi_0(r,w) -1} \cdot x^w
\end{align*}
Therefore, we can write
\begin{equation}
\frac{1}{5} U^{(0)} \circ U^{(1)} (f) = \frac{1}{(1+5x)^{25n+5}} \sum\limits_{w \geq 1} t(w) \cdot 5^{\theta_1(w)} \cdot x^w,
\end{equation}
where
\[t(w) \coloneqq \sum\limits_{r=1}^{5w-4} \sum\limits_{m=1}^{5r} s(m) \cdot h_1(m,n,r) \cdot h_0(r,5n,w) \cdot 5^{\theta_1(m) + \pi_1(m,r) + \pi_0(r,w) -\theta_1(w) -1}.\]
Thus, all that remains to be shown is that 
\begin{equation}\label{eq:t_congs}
\begin{pmatrix} t(1) + t(2) + t(3) + 2t(4) + t(5) \\ 4t(4) + t(6) + t(7) + t(8) \end{pmatrix} \equiv \begin{pmatrix} 0 \\ 0 \end{pmatrix} \pmod{5}.
\end{equation}
Clearly if $t(w) \equiv 0 \pmod{5}$ for some $1 \leq w \leq 8,$ then we may ignore it in the sum(s) appearing on the left-hand side of \eqref{eq:t_congs}. Therefore, when computing $t(w)$ for the purposes of verifying \eqref{eq:t_congs}, it suffices to only consider $r,m$ for which 
\begin{equation}\label{eq:ineq}
\theta_1(m)+\pi_1(m,r)+\pi_0(r,w)-\theta_1(w)-1 < 0.
\end{equation}
Hence, if we define 
\[\hat{t}(w) \coloneqq \sum\limits_{r=1}^{5w-4} \sum\limits_{\substack{m=1 \\ r,m \text{ satisfy } \eqref{eq:ineq}}}^{5r} s(m) \cdot h_1(m,0,r) \cdot h_0(r,0,w) \cdot 5^{\theta_1(m) + \pi_1(m,r) + \pi_0(r,w) -\theta_1(w) -1},\]
then by Proposition \ref{prop:h_cong} (remembering the hypothesis that $n \equiv 0 \pmod{5}$) and the above discussion, it suffices to show that 
\[\begin{pmatrix} \hat{t}(1) + \hat{t}(2) + \hat{t}(3) + 2\hat{t}(4) + \hat{t}(5) \\ 4\hat{t}(4) + \hat{t}(6) + \hat{t}(7) + \hat{t}(8) \end{pmatrix} \equiv \begin{pmatrix} 0 \\ 0 \end{pmatrix} \pmod{5}.\]
Computing these values of $\hat{t}(w)$, we find $\hat{t}(1)=0$ and 
\begin{align*}
\begin{split}
\hat{t}(2) &= \frac{1}{5} \left(4961 s(1) + 10406 s(2) + 6171 s(3) + 4575812 s(4) + 2991921 s(5) \right) \\
&+ 236628 s(6) + 58408 s(7) + 8848 s(8),
\end{split} \\
\begin{split} 
\hat{t}(3) &= \frac{1}{5}\left(388844s(1) + 815624 s(2) + 483684 s(3) + 1151708938 s(4) + 753195134 s(5)\right) \\
&+ 59571099 s(6) + 14704214 s(7) + 2227484 s(8), \end{split} \\
\begin{split}
\hat{t}(4) &= 2632405 s(1) + 5521630s(2) + 3274455s(3) + 18352874062s(4) \\
&+ 12003016215s(5) + 4746698523s(6) + 1171649878s(7) + 177488668s(8)   , \end{split} \\
\begin{split} 
\hat{t}(5) &= 51888370s(1) + 108839020s(2) + 64544070s(3) + 767043871640s(4) \\
&+ 501666289570s(5) + 198388915200s(6) + 48969267200s(7) + 7418163200s(8) ,
\end{split} \\
\begin{split} 
\hat{t}(6) &= 671367825s(1) + 1408234950s(2) + 835116075s(3) + 20258321552900s(4) \\
&+ 13249576649825s(5) + 5239683382500s(6) + 1293335645000s(7) + 195922370000s(8),
\end{split} \\
\begin{split}
\hat{t}(7) &=  6053383500s(1) + 12697341000s(2) + 7529818500s(3) + 371382334240250s(4) \\
&+ 242896720484750s(5) + 96056103747375s(6) + 23709978986750s(7) \\
&+ 3591732195500s(8),
\end{split} \\
\begin{split}
\hat{t}(8) &= 39202970000s(1) + 82230620000s(2) + 48764670000s(3) + 4989377344380000s(4) + \\
&+3263230087870000s(5) + 1290479615910000s(6) + 318535141260000s(7) \\
&+ 48253645560000s(8).
\end{split}
\end{align*}
Then
\begin{equation}\label{eq:t_hat1}
\begin{split}
\hat{t}(1)+\hat{t}(2) + \hat{t}(3) + 2\hat{t}(4) + \hat{t}(5) &= 57231941 s(1) + 120047486s(2) + 71190951s(3) \\
&+ 803980876714s(4) + 525823559411s(5) \\
&+ 207942119973s(6) + 5132732957s(7) \\
&+ 7775376868s(8)
\end{split}
\end{equation}
and 
\begin{equation}\label{eq:t_hat2}
\begin{split}
4\hat{t}(4) + \hat{t}(6) + \hat{t}(7) + \hat{t}(8) &= 45938250945s(1) + 96358282470s(2) + 57142702395s(3) \\
&+ 5381091411669398s(4) + 3519424397069435s(5) \\
&+ 1391794389833967s(6) + 343543142491262s(7) \\
&+ 52042010080172s(8).
\end{split}
\end{equation}
Let $I$ be the ideal of $\ZZ[s(1), \dots, s(8)]$ defined by 
\[I \coloneqq \left\langle s(1) + s(2) + s(3) + 2s(4) + s(5), 4s(4) + s(6) + s(7) + s(8)\right\rangle .\]
Using a computer algebra software, we see that the right-hand sides of \eqref{eq:t_hat1} and \eqref{eq:t_hat2} reduce to 0 modulo $5I$, therefore establishing \eqref{eq:t_congs} and completing the proof.  
\end{proof}
\end{thm}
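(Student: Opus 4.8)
The plan is to compute the composite $\frac{1}{5} U^{(0)} \circ U^{(1)}(f)$ explicitly as a rational polynomial in $x$ and then verify the two linear congruences that distinguish $\mathcal{V}_{25n+5}^{(1)}$ inside $\hat{\mathcal{V}}_{25n+5}$. Writing $f = \frac{1}{(1+5x)^n}\sum_{m \geq 1} s(m)\cdot 5^{\theta_1(m)}\cdot x^m$, I would apply $U^{(1)}$ and then $\frac{1}{5}U^{(0)}$ using the formulas of Theorem \ref{thm:h's} twice, and collect the coefficient of $x^w$ to obtain
\[\frac{1}{5} U^{(0)} \circ U^{(1)}(f) = \frac{1}{(1+5x)^{25n+5}} \sum_{w \geq 1} t(w)\cdot 5^{\theta_1(w)}\cdot x^w,\]
\[t(w) = \sum_{r=1}^{5w-4}\sum_{m=1}^{5r} s(m)\, h_1(m,n,r)\, h_0(r,5n,w)\, 5^{\theta_1(m)+\pi_1(m,r)+\pi_0(r,w)-\theta_1(w)-1}.\]
By Theorems \ref{thm:U0_Vhat} and \ref{thm:U1_V} the composite already lands in $\hat{\mathcal{V}}_{25n+5}$ (so the $t(w)$ are integers and the $5$-power shape is correct); the entire content of the theorem is therefore the pair of congruences $t(1)+t(2)+t(3)+2t(4)+t(5)\equiv 0$ and $4t(4)+t(6)+t(7)+t(8)\equiv 0 \pmod 5$.

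Since these involve only $w \in \{1,\dots,8\}$, I would reduce them to a finite check. Terms of $t(w)$ whose residual exponent $\theta_1(m)+\pi_1(m,r)+\pi_0(r,w)-\theta_1(w)-1$ is at least $1$ are divisible by $5$ and hence irrelevant; what controls the congruences is the finite collection of low-order triples $(m,r,w)$ on which the valuation bounds underlying Theorems \ref{thm:U0_Vhat} and \ref{thm:U1_V} fail to give a nonnegative exponent, i.e. those satisfying \eqref{eq:ineq}. The hypothesis $n \equiv 0 \pmod 5$ is exactly what lets me apply Proposition \ref{prop:h_cong} to replace $h_1(m,n,r)$ by $h_1(m,0,r)$ and $h_0(r,5n,w)$ by $h_0(r,0,w)$ modulo $5$, so that these surviving contributions assemble into the $n$-independent quantities $\hat{t}(w)$ for $1 \leq w \leq 8$.

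I would then compute each $\hat{t}(w)$ explicitly as a $\mathbb{Z}$-linear form in $s(1),\dots,s(8)$ using the base-case arrays from Theorem \ref{thm:h's} and Appendix \ref{app}, form the two target combinations $\hat{t}(1)+\hat{t}(2)+\hat{t}(3)+2\hat{t}(4)+\hat{t}(5)$ and $4\hat{t}(4)+\hat{t}(6)+\hat{t}(7)+\hat{t}(8)$, and show that both lie in the ideal $5I$, where $I = \langle s(1)+s(2)+s(3)+2s(4)+s(5),\ 4s(4)+s(6)+s(7)+s(8)\rangle$ encodes the defining congruences of $\mathcal{V}_n^{(1)}$. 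This is the precise moment the hypothesis $f \in \mathcal{V}_n^{(1)}$ is consumed: the two constraints on $s$ are what force these linear forms to vanish modulo $5$, so the missing factor of $5$ produced by the exceptional low-$w$ terms is absorbed.

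The main obstacle I expect is the valuation bookkeeping together with the final ideal membership. One must verify against the floor-function definitions of $\theta_0,\theta_1,\pi_0,\pi_1$ that the exceptional terms really do occur only for small $r$ and $w$ and contribute a deficit of exactly $-1$, so that the reduction to $\hat{t}(w)$ is legitimate; and then one must confirm that the resulting enormous integer linear forms genuinely lie in $5I$ and not merely in $I$, which is a finite but delicate reduction best delegated to a computer algebra system. An argument using only $\hat{\mathcal{V}}$-membership cannot detect these low-order congruences, which is precisely why the finer space $\mathcal{V}^{(1)}$ and the hypothesis on $f$ are indispensable.
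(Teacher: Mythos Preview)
Your proposal is correct and follows essentially the same route as the paper: expand the composite via Theorem \ref{thm:h's} to obtain the coefficients $t(w)$, discard all but the low-order terms satisfying \eqref{eq:ineq}, use Proposition \ref{prop:h_cong} together with $n\equiv 0\pmod 5$ to pass to the $n$-independent $\hat t(w)$, and then verify by explicit computation that the two target linear combinations lie in $5I$. You have also correctly identified where each hypothesis is used and that the final step is a finite computer check.
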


We now complete the proof of Theorem \ref{thm:main2} by induction, showing that $\frac{1}{{5^{\lfloor \alpha/2\rfloor}}} \cdot L_{\alpha} \in \mathcal{V}_{\psi(\alpha)}^{(i)},$ where $i \equiv n \pmod{2}.$ Indeed, from \eqref{eq:L1} it is clear that $L_1 \in \hat{\mathcal{V}}_5.$ To verify that $L_1 \in \mathcal{V}_5^{(1)},$ we just need to check that the coefficients of the numerator in our rational polynomial expression for $L_1$ satisfy the defining congruences given in \eqref{eq:Vn1}. This is straightforward to see, as 
\begin{align*}
1 + 40 + 794 + 2\cdot 9125 + \frac{64475}{5}  &\equiv 0 \pmod{5}, \\
4\cdot 9125+ \frac{286000}{25} + \frac{7800000}{25} + \frac{800000}{125} &\equiv 0 \pmod{5}. 
\end{align*}
If $\frac{1}{{5^{\lfloor \alpha/2\rfloor}}} \cdot L_{\alpha} \in \mathcal{V}_{\psi(\alpha)}^{(i)}$ for some odd $\alpha \geq 1,$ then Theorems \ref{thm:U1_V} and \ref{thm:stab} immediately imply 
\[\frac{1}{5} L_{\alpha+1} = \frac{1}{5} U^{(1)}(L_\alpha) \in \mathcal{V}_{5\cdot \psi(\alpha)}^{(0)}\]
and
\[\frac{1}{5} L_{\alpha+2} = \frac{1}{5} U^{(0)} \circ U^{(1)} (L_\alpha) \in \mathcal{V}_{25\cdot \psi(\alpha)+5}^{(1)}.\]
Hence, we are done if we show $\psi(\alpha+1) = 5 \psi(\alpha)$ and $\psi(\alpha+2) = 25\psi(\alpha)+5$ for all odd $\alpha \geq 1.$ But this is immediate since, for $\alpha \geq 1$ odd, 
\[5\psi(\alpha) = 5\left\lfloor \frac{5^{\alpha+2}}{24} \right\rfloor= 5\left(\frac{5^{\alpha+2}}{24} - \frac{5}{24}\right) = \left(\frac{5^{\alpha+3}}{24} - \frac{1}{24}\right) -1 = \psi(\alpha+1)\]
and
\begin{align*}
25\psi(\alpha)+5 &= 25\left\lfloor \frac{5^{\alpha+2}}{24} \right\rfloor + 5 = 25\left(\frac{5^{\alpha+2}}{24} - \frac{5}{24}\right)+5 = \frac{5^{\alpha+4}}{24} - \frac{125}{24} + 5 \\
&= \frac{5^{\alpha+4}}{24} - \frac{5}{24} = \psi(\alpha+2).
\end{align*}

\section{Further Congruences and Closing Remarks}\label{sec:closing}
We close this work by briefly highlighting two additional congruences satisfied by $a_d(n)$. Here we assume familiarity with classical results on modular forms of integer weight. 

\begin{thm}\label{thm:further_congs}
For all $n \geq 0,$ we have 
\[a_5(49n+31) \equiv 0 \pmod{7}\]
and 
\[a_9(121n+36) \equiv 0 \pmod{11}.\]
\begin{proof}
Consider the functions 
\[F_1 \coloneqq \frac{\eta(\tau)^{440}}{\eta(2\tau)^4}, \quad F_2 \coloneqq \frac{\eta(\tau)^{2056}}{\eta(2\tau)^8}.\]
Using slight generalizations of Lemmas \ref{lem:eta-quotient1} and \ref{lem:eta-quotient2}, along with Lemma \ref{lem:Umap}, we see 
\[U_{49}(F_1) \in M_{218}(\Gamma_0(4)), \quad U_{121}(F_2)\in M_{1024}(\Gamma_0(2)).\]
Now notice
\[U_{49}(F_1) = U_{49}\left(\frac{q^{18}}{f_1 f_2^4} \cdot f_1^{441}\right) = f_1^9 \sum\limits_{n=0}^\infty a_5(49n+31) q^{n+1},\]
thanks to Lemma \ref{lem:U_PS}. Similarly, it follows that
\[U_{121}(F_2) = f_1^{17} \sum\limits_{n=0}^\infty a_9(121n+36)q^{n+1}.\]
Therefore, it suffices to show $U_{49}(F_1) \equiv 0 \pmod{7}$ and $U_{121}(F_2) \equiv 0 \pmod{11}.$ By a well-known theorem of Sturm (see, e.g., \cite{Sturm}), it is sufficient to check these congruences for only finitely many Fourier coefficients, those up to $q^{109}$ and $q^{256},$ respectively. This behavior is quickly verified with computer algebra software, such as Maple. 
\end{proof}
\end{thm}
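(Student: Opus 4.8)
The plan is to prove Theorem \ref{thm:further_congs} by reducing each of the two claimed congruences to a statement about the vanishing of finitely many Fourier coefficients of an explicit modular form of integer weight, and then invoking Sturm's bound. First I would record the generating function identity
\[\sum_{n=0}^\infty a_d(n) q^n = \frac{1}{f_1 f_2^{d-1}},\]
which specializes to $1/(f_1 f_2^4)$ for $d=5$ and $1/(f_1 f_2^8)$ for $d=9$. The key trick, exactly as in the proof of Lemma \ref{lem:L_seq}, is to multiply this generating function by a high power of $f_1$ (and an appropriate power of $q$) so that the resulting eta-quotient becomes a holomorphic modular form. Concretely I would set $F_1 \coloneqq \eta(\tau)^{440}/\eta(2\tau)^4$ and $F_2 \coloneqq \eta(\tau)^{2056}/\eta(2\tau)^8$; expanding the eta-factors gives $F_1 = q^{18} f_1^{440}/f_2^4 = f_1^{441}\cdot q^{18}/(f_1 f_2^4)$ and similarly $F_2 = f_1^{2057} \cdot q^{99}/(f_1 f_2^8)$, where the exponents $440$, $2056$ and the $q$-powers $18$, $99$ are chosen precisely so that the offsets $31 = (49\cdot 0 + 31)$ and $36$ land correctly after applying $U_{49}$ and $U_{121}$. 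Using Lemma \ref{lem:U_PS} together with the fact that $f_1^{441} = f_1^{441}$ has $q$-expansion supported on multiples of $1$ — more precisely, pulling the $49$th-power (resp.\ $121$st-power) part out of the $U$-operator — one obtains
\[U_{49}(F_1) = f_1^9 \sum_{n=0}^\infty a_5(49n+31)\,q^{n+1}, \qquad U_{121}(F_2) = f_1^{17}\sum_{n=0}^\infty a_9(121n+36)\,q^{n+1}.\]

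The second ingredient is modularity: I would check via (a mild weight-$k$ generalization of) Lemma \ref{lem:eta-quotient1} and Lemma \ref{lem:eta-quotient2} that $F_1$ is a holomorphic modular form of weight $218$ on $\Gamma_0(196)$ and $F_2$ is a holomorphic modular form of weight $1024$ on $\Gamma_0(242)$ (the relevant conditions are the balancing of the weight $\tfrac12\sum r_\delta$, the two divisibility-by-$24$ conditions on $\sum \delta r_\delta$ and $\sum (N/\delta) r_\delta$, the character/square condition, and holomorphy at every cusp, which follows from Ligozat's formula since all the exponents on $\eta(\delta\tau)$ with small $\delta$ are large and positive). Then by Lemma \ref{lem:Umap} (applied to $U_{49}$ with $49 \mid 196$ and $49^2 \nmid 196$, giving level $196/49 = 4$; and to $U_{121}$ with $121 \mid 242$, giving level $242/121 = 2$) we get $U_{49}(F_1) \in M_{218}(\Gamma_0(4))$ and $U_{121}(F_2) \in M_{1024}(\Gamma_0(2))$.

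With modularity in hand, the congruences $a_5(49n+31)\equiv 0 \pmod 7$ and $a_9(121n+36)\equiv 0 \pmod{11}$ are equivalent (after clearing the harmless factors $f_1^9$, $f_1^{17}$, which are units modulo the respective primes in the ring of formal power series with the leading $q$-power accounted for) to $U_{49}(F_1) \equiv 0 \pmod 7$ and $U_{121}(F_2)\equiv 0 \pmod{11}$ as $q$-series. Since both sides are $q$-expansions of modular forms of known weight and level, Sturm's theorem (\cite{Sturm}) reduces each congruence to checking that finitely many Fourier coefficients vanish modulo the prime: the Sturm bound $\tfrac{k}{12}[\mathrm{SL}_2(\mathbb Z):\Gamma_0(N)]$ gives roughly $q^{109}$ for the weight-$218$ form on $\Gamma_0(4)$ and roughly $q^{256}$ for the weight-$1024$ form on $\Gamma_0(2)$. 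Finally I would verify those finitely many coefficients directly with a computer algebra system. The main obstacle — really the only non-routine point — is getting the bookkeeping exactly right: choosing the eta-quotient exponents and the $q$-shift so that (i) the eta-quotient is a genuine holomorphic modular form on the correct group, (ii) applying $U_{49}$ or $U_{121}$ extracts precisely the arithmetic progression $49n+31$ or $121n+36$, and (iii) the leftover eta-power does not interfere modulo $7$ or $11$; once the exponents $440$, $2056$ are pinned down everything else is a finite verification.
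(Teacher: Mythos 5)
Your proposal follows the paper's proof essentially verbatim: the same eta-quotients $F_1=\eta(\tau)^{440}/\eta(2\tau)^4$ and $F_2=\eta(\tau)^{2056}/\eta(2\tau)^8$, the same extraction of the progressions via $U_{49}$ and $U_{121}$, the same reduction to $M_{218}(\Gamma_0(4))$ and $M_{1024}(\Gamma_0(2))$, and the same Sturm-bound-plus-computer finish. Two small slips are worth fixing. First, the $q$-shift for $F_2$ is $q^{(2056-16)/24}=q^{85}$, not $q^{99}$; with $q^{85}$ one gets $-85\equiv 36\pmod{121}$, which is exactly what produces the stated progression $121n+36$ (with $q^{99}$ you would land on $121n+22$). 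Second, your justification of the level drop misquotes Lemma \ref{lem:Umap}: that lemma lowers the level from $N$ to $N/d$ precisely when $d^2\mid N$, so the clause ``$49^2\nmid 196$, giving level $196/49=4$'' is not what the lemma asserts; the honest application is to factor $U_{49}=U_7\circ U_7$ (and $U_{121}=U_{11}\circ U_{11}$) and track the level through each step, which is presumably what the paper's terse citation intends. Neither issue changes the architecture of the argument or the final identities you state, which agree with the paper's.
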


We encourage the interested reader to further investigate arithmetic properties of generalized cubic partitions. In particular, we would be very interested to see if the congruences in Theorem \ref{thm:further_congs} fit into infinite families, akin to that of Theorem \ref{thm:main}.

\appendix
\section{Initial relations for the proof of Theorem \ref{thm:h's}}\label{app}
\begin{align*}
\begin{split}
U^{(0)}(1) &= \frac{1}{(1+5x)^5} (x+40x^2+794x^3+9125x^4+64475x^5+286000x^6 \\&+7800000x^7+1200000x^8+800000x^9),
\end{split} \\
\begin{split}
U^{(0)}(x) &= \frac{1}{(1+5x)^5}(121x^2+9484x^3+321025x^4+6327850x^5+81874125x^6+738217500x^7 \\ 
&+ 4780850000x^8+22488800000x^9+76460000000x^{10}+183600000000x^{11} \\
&+ 296000000000x^{12} + 288000000000x^{13} + 128000000000x^{14}) \end{split} \\
\begin{split} 
U^{(0)}(x^2) &= \frac{1}{(1+5x)^5} (140x^2 + 35245x^3 +2808365x^4 +117376000x^5 + 3100037500x^6 \\
&+ 56831205625x^7 + 763507050000x^8 + 7771895500000x^9 +61182640000000x^{10}\\
&+  376797500000000x^{11} +1823151200000000x^{12}  + 6913681600000000x^{13} \\
&+ 20347776000000000x^{14}+ 45594240000000000x{^15} + 75238400000000000x^{16}\\
&+ 86272000000000000x^{17} + 61440000000000000x^{18} + 20480000000000000x^{19})
\end{split}\\
\begin{split}
U^{(0)}(x^3) &= \frac{1}{(1+5x)^5}( 64 x^{2}+59136 x^{3}+10547620 x^{4}+850378650 x^{5}+40530512250 x^{6}\\
&+1298590915000x^7 +30103152240625 x^{8}+528858099450000 x^{9}\\
&+7262462532500000 x^{10}+79624221710000000x^{11} +707157357820000000 x^{12}\\
&+5136043622800000000 x^{13}+30674128864000000000 x^{14}\\
&+150938497280000000000x^{15} +611049315200000000000 x^{16}\\
&+2024736448000000000000 x^{17} +5439398912000000000000 x^{18}\\
&+11668643840000000000000x^{19} +19525324800000000000000 x^{20} \\
&+24567808000000000000000x^{21} +21872640000000000000000x^{22} \\
&+12288000000000000000000 x^{23} +3276800000000000000000 x^{24})  
\end{split} 
\end{align*}

\begin{align*}
\begin{split}
U^{(0)}(x^4) &= \frac{1}{(1+5x)^5}(13 x^{2}+54342 x^{3}+21645560 x^{4}+3231134475 x^{5}+261994052875 x^{6}\\
&+13648364390000 x^{7}+501535624578125 x^{8}+13781722427603125 x^{9}\\
&+294509461032250000 x^{10} +5030953041631500000 x^{11}+70082663702580000000 x^{12}\\
&+807847198383100000000 x^{13} +7788550590672000000000 x^{14}\\
&+63288334721120000000000 x^{15}+435690966505600000000000 x^{16}\\
&+2548759030153600000000000 x^{17}+12682694057728000000000000 x^{18} \\
&+53629451281920000000000000 x^{19} +192118376960000000000000000 x^{20}\\
&+579878492672000000000000000 x^{21}+1462436904960000000000000000 x^{22}\\
&+3044539924480000000000000000 x^{23}+5141902131200000000000000000 x^{24}\\
&+6869368832000000000000000000 x^{25}+6987448320000000000000000000 x^{26}\\
&+5085593600000000000000000000 x^{27}+2359296000000000000000000000 x^{28}\\
&+524288000000000000000000000 x^{29})    
\end{split}.
\end{align*}

\bibliographystyle{alpha}
\bibliography{refs}
\end{document}